\newcommand{\Z}{\mathbb Z}
\newcommand{\1}{\mathbf 1}
\newtheorem{theorem}{Theorem}[section]
\newtheorem{lemma}[theorem]{Lemma}
\newtheorem{corollary}[theorem]{Corollary}
\theoremstyle{definition}
\newtheorem{definition}[theorem]{Definition}
\newtheorem{remark}[theorem]{Remark}
\newtheorem{observation}[theorem]{Observation}
\author{Krishnendu Paul, Shameek Paul%
\thanks{E-mail addresses: \texttt{krishnendupaul@ggdcgopi2.ac.in, shameek.paul@rkmvu.ac.in}}}
\affil{\small
Government General Degree College Gopiballavpur-II,
P.O. Beliaberah, Dist. Jhargram, 721517, India \\

\bigskip
Ramakrishna Mission Vivekananda Educational and Research Institute, P.O. Belur Math, Dist. Howrah, 711202, India}
\date{}
\title {$Q_p$-weighted zero-sum constants}
\begin{document}

\baselineskip=14.5pt

\maketitle

\begin{abstract}
A sequence $S=(x_1,\ldots, x_k)$ in $\Z_p$ is called a $(Q_p,\mathbf 1)$-weighted zero-sum sequence if there exist $a_1,\ldots,a_k\in Q_p$ such that $a_1x_1+\cdots+a_kx_k=0$ and $a_1+\cdots+a_k=0$. The constant $E_{Q_p,\mathbf 1}$ is defined to be the smallest positive integer $k$ such that every sequence of length $k$ in $\Z_p$ has a $(Q_p,\mathbf 1)$-weighted zero-sum subsequence of length $p$. We determine the constant $E_{Q_p,\mathbf 1}$ and the related constants $C_{Q_p,\mathbf 1}$ and $D_{Q_p,\mathbf 1}$. We also study some $(Q_p,B)$-weighted zero-sum constants where $B$ is a subset of $Q_p$.
\end{abstract}

\section{Introduction}\label{intro}

By a sequence $S$ in a set $X$ of length $k$, we mean an element of the set $X^k$. For every $n\in \mathbb N$ with $n\geq 2$ we denote the ring $\mathbb Z/n\mathbb Z$ by $\mathbb Z_n$. We let $A$ and $B$ be non-empty subsets of $\mathbb Z_n\setminus \{0\}$. A sequence $(x_1,\ldots, x_k)$ in $\mathbb Z_n$ is called an {\it $A$-weighted zero-sum sequence} if there exist $a_1,\ldots,a_k\in A$ such that $a_1x_1+\cdots+a_kx_k=0$. A sequence $(x_1,\ldots,x_k)$ in $\mathbb Z_n$ is called an {\it $(A,B)$-weighted zero-sum sequence} if there exist $a_1,\ldots,a_k\in A$ and $b_1,\ldots,b_k\in B$ such that $a_1x_1+\cdots+a_kx_k=0$ and $b_1a_1+\cdots +b_ka_k=0$.

For $a,b\in\mathbb Z$ we denote the set $\{x\in\mathbb Z:a\leq x\leq b\}$ by $[a,b]$. We let $|A|$ denote the number of elements in a finite set $A$. We denote the subsets $\{0\}$ and $\{1\}$ of $\mathbb Z_n$ by $\mathbf 0$ and $\mathbf 1$ respectively. A sequence which is a $\mathbf 1$-weighted zero-sum sequence is simply called a {\it zero-sum sequence}.

We let $U(n)$ denote the group of units of $\mathbb Z_n$. We reserve the letter $p$ to denote an odd prime. We let $Q_p=\big\{x^2:x\in U(p)\big\}$ and $N_p=U(p)\setminus Q_p$. Let $S=(x_1,\ldots,x_k)$ be a sequence in $\mathbb Z_n$ and let $x\in \mathbb Z_n$. We define the {\it translate} of $S$ by $x$ to be the sequence $S+x=(x_1+x,\ldots,x_k+x)$.

\begin{observation}\label{tr}
Suppose a sequence $S$ in $\mathbb Z_n$ is an $(A,\mathbf 1)$-weighted zero-sum sequence. Then every translate of $S$ is also an $(A,\mathbf 1)$-weighted zero-sum sequence, since the following identity holds.
\[a_1(x_1+x)+\cdots+a_k(x_k+x)=a_1x_1+\cdots +a_kx_k+(a_1+\cdots+a_k)x.\]
\end{observation}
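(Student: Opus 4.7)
The plan is to verify the observation directly by taking the same weights $a_1,\ldots,a_k$ that witness $S$ being an $(A,\mathbf 1)$-weighted zero-sum sequence and checking that they also work for the translate $S+x$.

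First I would unpack the definition: since $B=\mathbf 1=\{1\}$, choosing $b_i\in\mathbf 1$ forces $b_i=1$ for each $i$, so the condition $\sum b_i a_i=0$ reduces to $\sum a_i=0$. Thus the hypothesis gives us $a_1,\ldots,a_k\in A$ with $\sum a_i x_i=0$ and $\sum a_i=0$.

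Next I would apply the displayed identity with this choice of weights to the translated sequence $S+x=(x_1+x,\ldots,x_k+x)$. The identity rewrites $\sum a_i(x_i+x)$ as $\sum a_i x_i+(\sum a_i)\,x$, and both summands vanish by the hypothesis, giving $\sum a_i(x_i+x)=0$. The second weight-sum condition $\sum a_i=0$ is inherited unchanged, so the same tuple $(a_1,\ldots,a_k)$ witnesses that $S+x$ is an $(A,\mathbf 1)$-weighted zero-sum sequence.

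There is no real obstacle here; the observation is a one-line consequence of the identity, and the only point requiring care is noting that the constraint $\sum b_i a_i=0$ degenerates to $\sum a_i=0$ precisely because $B=\mathbf 1$ (this is why the statement is specific to $(A,\mathbf 1)$-weighted and would fail for a general $B$ whose elements do not kill the translation term). I would record this explicitly, since the observation will presumably be invoked later to reduce various extremal problems in $\mathbb Z_p$ to sequences containing a prescribed element such as $0$.
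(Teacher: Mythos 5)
Your proof is correct and is essentially the paper's own argument: the same weights $a_1,\ldots,a_k$ are reused, and the displayed identity together with $\sum a_ix_i=0$ and $\sum a_i=0$ shows the translate is again an $(A,\mathbf 1)$-weighted zero-sum sequence. Your explicit remark that the condition $\sum b_ia_i=0$ reduces to $\sum a_i=0$ because $B=\mathbf 1$ is a fine clarification but does not change the argument.
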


\begin{remark}\label{trans}
If we want to show that a sequence $S$ in $\mathbb Z_n$ is an $(A,\1)$-weighted zero-sum sequence, then by Observation \ref{tr} we may assume that zero occurs the most number of times as a term of $S$.
\end{remark}

We now define the constants which we study in this article.

\begin{definition}
The constant $C_{A,B}(n)$ is the least positive integer $k$ such that any sequence in $\mathbb Z_n$ of length $k$ has an $(A,B)$-weighted zero-sum subsequence having consecutive terms.

The constant $D_{A,B}(n)$ is the least positive integer $k$ such that any sequence in $\mathbb Z_n$ of length $k$ has an $(A,B)$-weighted zero-sum subsequence.

The constant $E_{A,B}(n)$ is the least positive integer $k$ such that any sequence in $\mathbb Z_n$ of length $k$ has an $(A,B)$-weighted zero-sum subsequence of length $n$.
\end{definition}

The constants $C_A(n)$, $D_A(n)$, and $E_A(n)$ are defined to be the constants $C_{A,\mathbf 0}(n)$, $D_{A,\mathbf 0}(n)$, and $E_{A,\mathbf 0}(n)$ respectively. We see that $D_{A,B}(n)\leq C_{A,B}(n)$ and $D_{A,B}(n)\leq E_{A,B}(n)$. In \cite{KS} it has been shown that
\begin{equation}\label{ubzn}
C_{A,B}(n)\leq n^2 ~\mbox{ and }~ D_{A,B}(n)\leq E_{A,B}(n)\leq 2n-1.
\end{equation}

When the number $n$ is clear from the weight-set pair $(A,B)$, we denote the constants $C_{A,B}(n)$, $D_{A,B}(n)$, and $E_{A,B}(n)$ simply as $C_{A,B}$, $D_{A,B}$, and $E_{A,B}$. We use similar notation for the constants $C_A(n)$, $D_A(n)$, and $E_A(n)$.

\section{Determining the value of $E_{Q_p,\1}$}

\begin{remark}\label{eqp1}
In this section, we will show that
\[E_{Q_p,\1}=
\begin{cases}
p+2, & \text{if }p\neq 5; \\
9, & \text{if }p=5.
\end{cases}
\]
\end{remark}

\begin{theorem}\label{weak}
We have $E_{Q_p,\1}\in [p+2, p+4]$.
\end{theorem}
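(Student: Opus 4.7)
My plan is to establish the two inequalities $p+2\leq E_{Q_p,\1}$ and $E_{Q_p,\1}\leq p+4$ separately, handling the lower bound by an explicit construction and the upper bound by a translate-and-decompose argument.

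For the lower bound I will exhibit a length-$(p+1)$ sequence in $\mathbb{Z}_p$ with no $(Q_p,\1)$-weighted zero-sum subsequence of length $p$. For $p\geq 5$ my candidate is
\[
S=(0,\,g,\,\underbrace{1,1,\ldots,1}_{p-1}),
\]
where $g\in\mathbb{Z}_p\setminus\{0,1\}$ is chosen so that $g-1\in N_p$; such $g$ exists because $|N_p|=(p-1)/2\geq 2$, so I can write $g=1+n$ for some $n\in N_p\setminus\{-1\}$. The three kinds of length-$p$ subsequences---obtained by deleting the $0$, the $g$, or a single $1$---each yield two linear constraints on the weights which, upon elimination, reduce to $a(g-1)=0$, $a=0$, or $a_1/a_2=g-1$ respectively. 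The first two force a weight to vanish, contradicting $a\in Q_p$; the third asks the ratio of two elements of $Q_p$ (necessarily in $Q_p$) to equal the non-square $g-1\in N_p$, also impossible. For the remaining case $p=3$ I use $(0,0,1,1)$ instead: since $Q_3=\{1\}$, a length-$3$ subsequence is $(Q_3,\1)$-weighted zero-sum iff its terms sum to $0$, which is visibly false here.

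For the upper bound let $S$ have length $p+4$. Using Observation~\ref{tr} and Remark~\ref{trans}, I translate so that $0$ appears most often in $S$; let $m$ be its multiplicity. If $m\geq p$, I take $p$ copies of $0$ with every weight equal to $1\in Q_p$; both sums vanish since $p\equiv 0\pmod p$. If $m\leq p-1$, the nonzero terms number at least $5$, and I seek a splitting of the desired length-$p$ subsequence into $r$ nonzero terms $y_j$ with weights $b_j\in Q_p$ satisfying $\sum b_jy_j=0$, together with $p-r$ chosen zeros bearing weights $c_\ell\in Q_p$ with $\sum c_\ell=-\sum b_j$, where $r$ varies over $[\max(0,p-m),\min(p,p+4-m)]$. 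Combining $D_{Q_p}(\mathbb{Z}_p)=3$ (to locate short $Q_p$-weighted zero-sums among the nonzero terms) with the covering property of the iterated sumset $T_s=Q_p+\cdots+Q_p$ ($s$ summands), which equals $\mathbb{Z}_p$ once $s\geq 3$ for $p\geq 7$ and once $s\geq 4$ for $p=5$, I verify that a valid choice of $r$ and compatible weights always exists.

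The main obstacle is the upper bound. The conditions $\sum b_jy_j=0$ and $-\sum b_j\in T_{p-r}$ are coupled through $r$, and committing to $r$ in order to achieve the first can land $-\sum b_j$ in a coset where $T_{p-r}$ has a gap; this phenomenon is genuine, since $0\notin T_3$ when $p=5$, and is precisely the structural reason why $E_{Q_p,\1}$ jumps at $p=5$. The length $p+4$ provides a window of five admissible values of $r$, and the strategy is to exploit this flexibility to sidestep the gaps in the sumsets $T_s$ for every odd prime $p$, thereby securing the uniform bound $p+4$.
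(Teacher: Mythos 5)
Your lower bound is correct and is genuinely different from the paper's: the paper simply uses $E_{Q_p}\le E_{Q_p,\1}$ together with $E_{Q_p}=p+2$ from Theorem 3 of \cite{AR}, whereas you exhibit the explicit length-$(p+1)$ sequence $(0,g,1,\ldots,1)$ with $g-1\in N_p$, and your three elimination cases (delete $0$, delete $g$, delete a $1$) do rule out every length-$p$ subsequence; the separate sequence $(0,0,1,1)$ for $p=3$ also works. This half is self-contained and fine.

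The upper bound, however, has a genuine gap, and it sits exactly where the difficulty lies. After translating so that $0$ has maximal multiplicity $m$, your scheme needs, for some admissible $r$, a choice of $r$ nonzero terms $y_j$ and weights $b_j\in Q_p$ with $\sum_j b_jy_j=0$ and $-\sum_j b_j$ in the $(p-r)$-fold sumset of $Q_p$, subject to $p-r\le m$. When $m\ge 3$ and $p\ge 7$ this is easy (the three-fold sumset is all of $\Z_p$, as in Lemma \ref{3z}), but $m=2$ genuinely occurs at length $p+4$ (take $0$ and $(p+1)/2$ other residues twice each, plus one more residue once), and then the admissible window is only $r\in\{p-2,p-1,p\}$, not five values. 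If moreover $p\equiv 3\pmod 4$, then $0\notin Q_p$ and $0\notin Q_p+Q_p$, so each of these three choices forces you to produce weights with $\sum_j b_jy_j=0$ and $\sum_j b_j\neq 0$ (or, for $r=p$, a full-length all-nonzero $(Q_p,\1)$-weighted zero-sum). Neither $D_{Q_p}=3$ nor the covering property of iterated sumsets of $Q_p$ yields such ``unbalanced'' weightings; their existence is precisely the content of the paper's Lemmas \ref{ns7} and \ref{ns'}, which rest on the Dias da Silva--Hamidoune (Erd\H os--Heilbronn) theorem and which the paper needs only for the sharper bound $p+2$ in Theorem \ref{e'q}. (The case $p=5$, where your sumset tool requires four zeros, is likewise left unargued, and there the bound $p+4=9$ is tight.) Your own closing paragraph concedes that this coupling is unresolved, so as written $E_{Q_p,\1}\le p+4$ is asserted rather than proved. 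Note that the paper's proof of this bound is a one-step polynomial argument: apply Chevalley--Warning to the three forms $\sum_i a_iX_i^2$, $\sum_i X_i^2$, $\sum_i X_i^{p-1}$ in $p+4$ variables (total degree $p+3<p+4$); the third form forces the support of a nontrivial solution to have size exactly $p$, and the first two give the two required conditions. To salvage your route you would have to prove an Erd\H os--Heilbronn-type lemma for the small-$m$ cases, at which point you are essentially reproving the harder Theorem \ref{e'q} rather than the weaker bound claimed here.
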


\begin{proof}
Since every $(Q_p,\1)$-weighted zero-sum sequence is a $Q_p$-weighted zero-sum sequence, we see that $E_{Q_p}\leq E_{Q_p,\1}$. From Theorem 3 of \cite{AR} we see that $E_{Q_p}=p+2$. Hence, we get that $E_{Q_p,\1}\geq p+2$.

Let $m=p+4$ and let $S=(a_1,\ldots,a_m)$ be a sequence in $\Z_p$. We show that $S$ has a $(Q_p,\1)$-weighted zero-sum subsequence of length $p$. Consider the following system of equations in the variables $X_1,\ldots,X_m$ over the field $\Z_p$.
\[a_1X_1^2\,+\,\cdots\,+\,a_mX_m^2=0~,~~~~ X_1^2\,+\,\cdots\,+\,X_m^2=0~,~~~X_1^{p-1}\,+\,\cdots\,+\,X_m^{p-1}=0.\]
By the Chevalley-Warning Theorem \cite{S}, this system has a non-trivial solution $(b_1,\ldots,b_m)$. Let $J=\big\{i\in [1,m]:b_i\neq 0\big\}$.

Since $U(p)$ is a group of order $p-1$ and $m=p+4$, from the third equation it follows that $|J|=p$. Let $T$ be the subsequence of $S$ such that $a_i$ is a term of $T$ if and only if $i\in J$. From the first two equations, we see that $T$ is a $(Q_p,\1)$-weighted zero-sum subsequence of $S$ having length $|J|$. Hence, it follows that $E_{Q_p,\1}\leq p+4$.
\end{proof}

\begin{lemma}\label{q51}
A sequence in $\Z_5$ having length five is a $(Q_5,\1)$-weighted zero-sum sequence if and only if it is a $(\1,\1)$-weighted zero-sum sequence.
\end{lemma}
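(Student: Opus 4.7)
The plan is to exploit the small-cardinality coincidence $Q_5=\{1,4\}=\{1,-1\}\subseteq \Z_5$ (the quadratic residues mod $5$ are $1^2\equiv 4^2\equiv 1$ and $2^2\equiv 3^2\equiv 4$), together with the parity/size constraint forced by having exactly five weights, each $\pm 1$. The upshot will be that among five weights from $\{1,-1\}$, the condition $a_1+\cdots+a_5\equiv 0\pmod 5$ is extremely restrictive.

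For the easy direction I would start from a $(\1,\1)$-weighted zero-sum sequence $(x_1,\ldots,x_5)$, which by unpacking the definition simply means $x_1+\cdots+x_5=0$ in $\Z_5$ (the side condition $\sum b_i a_i=5\equiv 0$ being automatic). Choosing all $a_i=1\in Q_5$ immediately exhibits it as a $(Q_5,\1)$-weighted zero-sum sequence, since both $\sum a_ix_i=\sum x_i=0$ and $\sum a_i=5\equiv 0\pmod 5$ hold.

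For the nontrivial direction, suppose $(x_1,\ldots,x_5)$ is $(Q_5,\1)$-weighted zero-sum, witnessed by $a_1,\ldots,a_5\in\{1,-1\}$ with $\sum a_i x_i=0$ and $\sum a_i\equiv 0\pmod 5$. Letting $r=|\{i:a_i=1\}|$, I compute (lifting to $\Z$) that $\sum a_i=2r-5$, which is an odd integer lying in $[-5,5]$, hence in $\{-5,-3,-1,1,3,5\}$. The only such values congruent to $0$ modulo $5$ are $\pm 5$, forcing $r\in\{0,5\}$. Thus all $a_i$ share a common sign $\varepsilon\in\{\pm 1\}$, so $\sum a_i x_i=\varepsilon\sum x_i=0$ yields $\sum x_i=0$, which means $(x_1,\ldots,x_5)$ is a $(\1,\1)$-weighted zero-sum sequence.

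The proof is essentially a bookkeeping argument; the only conceptual step is spotting that $Q_5=\{\pm1\}$ collapses the zero-sum-of-weights condition into a counting constraint that five $\pm 1$'s cannot satisfy nontrivially. There is no genuine obstacle beyond making sure the two definitions are unpacked correctly and that the $r=0$ and $r=5$ cases are both handled.
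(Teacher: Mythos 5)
Your proposal is correct and follows the same route as the paper: both reduce to the observation that $Q_5=\{1,-1\}$, so five weights from $\{\pm 1\}$ summing to $0$ in $\Z_5$ must all be equal, whence $\sum a_ix_i=0$ forces $\sum x_i=0$; your lift-to-$\Z$ counting with $2r-5$ just makes explicit the "constant sequence" step the paper states directly. The easy direction (take all weights equal to $1$) is also identical.
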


\begin{proof}
Let $T=(x_1,\ldots,x_5)$ be a $(Q_5,\1)$-weighted zero-sum sequence in $\Z_5$. There exist $a_1,\ldots,a_5\in Q_5$ such that $a_1x_1+\cdots +a_5x_5=0$ and $a_1+\cdots +a_5=0$. Since $Q_5=\{1,-1\}$ and $a_1+\cdots +a_5=0$, we see that the sequence $(a_1,\ldots,a_5)$ is a constant sequence. It follows that $x_1+\cdots +x_5=0$. Hence, we see that $T$ is a $(\1,\1)$-weighted zero-sum sequence. Also, every $(\1,\1)$-weighted zero-sum sequence in $\Z_5$ is a $(Q_5,\1)$-weighted zero-sum sequence.
\end{proof}

\begin{corollary}\label{q5}
We have $E_{Q_5,\1}=9$.
\end{corollary}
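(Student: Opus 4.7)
The plan is to combine Theorem \ref{weak}, which already gives $E_{Q_5,\1}\leq p+4=9$, with a matching lower bound $E_{Q_5,\1}\geq 9$ obtained via Lemma \ref{q51}. Since the upper bound is free, the entire task is to exhibit one sequence of length $8$ in $\Z_5$ which has no $(Q_5,\1)$-weighted zero-sum subsequence of length $5$.

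To construct such a sequence, I would first use Lemma \ref{q51} to translate the question. It says that a length-$5$ subsequence of $\Z_5$ is $(Q_5,\1)$-weighted zero-sum if and only if it is $(\1,\1)$-weighted zero-sum. For a length-$5$ sequence $(x_1,\ldots,x_5)$ in $\Z_5$, being $(\1,\1)$-weighted zero-sum amounts to $x_1+\cdots+x_5=0$ together with $1+\cdots+1=5=0$ in $\Z_5$; the second condition is automatic. Hence the question reduces to the classical problem: produce a sequence of length $8$ in $\Z_5$ none of whose length-$5$ subsequences has ordinary sum equal to $0$. This is exactly the extremal situation behind the Erdős--Ginzburg--Ziv bound $E_{\1}(5)=9$.

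The natural candidate is the standard extremal EGZ example $S=(0,0,0,0,1,1,1,1)$. Any subsequence of $S$ of length $5$ consists of $j$ zeros and $5-j$ ones for some $j\in[1,4]$, so its sum is $5-j\in\{1,2,3,4\}$, which is nonzero in $\Z_5$. By the reduction above, $S$ has no $(Q_5,\1)$-weighted zero-sum subsequence of length $5$, giving $E_{Q_5,\1}\geq 9$. Combined with Theorem \ref{weak} this yields $E_{Q_5,\1}=9$.

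There is no real obstacle here beyond noticing the reduction: Lemma \ref{q51} does all the work by collapsing the weighted problem to the unweighted EGZ problem for $\Z_5$, for which the length-$8$ extremal example is classical. The only small point to be careful about is verifying that the $(\1,\1)$ second condition is automatic at length $p=5$ in $\Z_5$, which is just the observation that $5\equiv 0 \pmod 5$.
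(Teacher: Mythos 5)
Your proof is correct, and it differs from the paper's in a way worth noting. The paper's proof is a two-line reduction: Lemma \ref{q51} gives $E_{Q_5,\1}=E_{\1,\1}(5)$ (both constants only ever test subsequences of length $5$), and then it cites \cite{KS} for $E_{\1,\1}(5)=9$, which covers both bounds at once. You use the same key Lemma \ref{q51}, but only for the lower bound, where you make the reduction to the classical Erd\H os--Ginzburg--Ziv extremal situation explicit and exhibit the length-$8$ sequence $(0,0,0,0,1,1,1,1)$, correctly checking that any length-$5$ subsequence has $j\in[1,4]$ zeroes and hence sum $5-j\neq 0$ in $\Z_5$, and that the second $(\1,\1)$-condition is automatic at length $5$. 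For the upper bound you invoke Theorem \ref{weak}, exploiting the coincidence that $p+4=2p-1$ exactly when $p=5$, so the Chevalley--Warning bound already matches the EGZ value $9$. The net effect is a self-contained argument that avoids the external citation to \cite{KS}, at the cost of being slightly longer; the paper's route is shorter but leans on the doubly-weighted EGZ result for $(\1,\1)$ proved elsewhere. Both are valid.
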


\begin{proof}
By Lemma \ref{q51}, it follows that $E_{\1,\1}(5)=E_{Q_5,\1}$, and from \cite{KS} we see that $E_{\1,\1}(5)=9$.
\end{proof}

The next result is Lemma 1 of \cite{CM}.

\begin{lemma}\label{cm}
Let $p\geq 7$ and $x_1,x_2,x_3\in U(p)$. Then $x_1\,Q_p+x_2\,Q_p+x_3\,Q_p=\Z_p$.
\end{lemma}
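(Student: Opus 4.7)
The plan is a direct application of the Cauchy--Davenport theorem, iterated twice. Recall that $Q_p$ is the image of the two-to-one squaring map on $U(p)$, so $|Q_p| = (p-1)/2$; and since each $x_i$ lies in $U(p)$, multiplication by $x_i$ is a bijection of $\Z_p$, giving $|x_i Q_p| = (p-1)/2$ for $i = 1, 2, 3$.

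First I would apply Cauchy--Davenport to the pair $x_1 Q_p$ and $x_2 Q_p$ to get
\[|x_1 Q_p + x_2 Q_p| \;\geq\; \min\bigl(p,\; (p-1)/2 + (p-1)/2 - 1\bigr) \;=\; p - 2.\]
Then applying the theorem a second time with the summand $x_3 Q_p$ yields
\[|x_1 Q_p + x_2 Q_p + x_3 Q_p| \;\geq\; \min\bigl(p,\; (p-2) + (p-1)/2 - 1\bigr) \;=\; \min\bigl(p,\; (3p-7)/2\bigr).\]
The hypothesis $p \geq 7$ is exactly what is needed to ensure $(3p-7)/2 \geq p$; in that range the sumset has at least $p$ elements and therefore equals $\Z_p$.

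There is essentially no obstacle beyond verifying the arithmetic, and the cutoff $p \geq 7$ drops out of the calculation naturally (for $p = 5$ the bound degrades to $4 < p$, which is consistent with Lemma \ref{q51}, where $Q_5 = \{1, -1\}$ is so small that $(Q_5,\1)$-weighted zero-sums collapse to ordinary zero-sums). Since the statement is purely additive-combinatorial, I do not see a need to invoke Chevalley--Warning here as was done for Theorem \ref{weak}; two lines of Cauchy--Davenport suffice.
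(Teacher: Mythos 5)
Your argument is correct: $|x_iQ_p|=(p-1)/2$ since $x_i\in U(p)$, the two applications of Cauchy--Davenport give $|x_1Q_p+x_2Q_p|\geq p-2$ and then $|x_1Q_p+x_2Q_p+x_3Q_p|\geq\min\bigl(p,(3p-7)/2\bigr)$, and $(3p-7)/2\geq p$ is exactly $p\geq 7$, so the sumset is a subset of $\Z_p$ of size at least $p$ and hence all of $\Z_p$. Note, though, that the paper does not prove this statement at all: it is quoted as Lemma 1 of \cite{CM}, so your two-line Cauchy--Davenport computation supplies a self-contained proof where the authors rely on a citation; this is a genuine (if modest) gain, since the paper already imports Cauchy--Davenport from \cite{Nat} elsewhere (in Lemma \ref{ns'}), so no new tool is introduced. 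One small caveat on your closing aside: for $p=5$ the degradation of the lower bound to $4$ does not by itself show the lemma fails there --- that requires the explicit observation that $Q_5+Q_5+Q_5=\{\pm1\pm1\pm1\}=U(5)$ misses $0$ --- and the failure is more directly reflected in the remark after Lemma \ref{l2} and in Corollary \ref{q5} than in Lemma \ref{q51} itself; but this does not affect the proof of the stated lemma.
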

 
\begin{corollary}\label{zs}
Let $p\geq 7$ and let $S$ be a sequence in $\Z_p$ which has at least three non-zero terms. Then $S$ is a $Q_p$-weighted zero-sum sequence.
\end{corollary}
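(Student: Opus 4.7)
The plan is to reduce the question to Lemma \ref{cm} by peeling off three non-zero terms and absorbing everything else into a single right-hand side. Since weights at zero terms of $S$ do not affect the weighted sum, they can be chosen arbitrarily (say, all equal to $1$), so only the non-zero terms matter.

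More precisely, let $y_1,y_2,y_3$ be three distinguished non-zero terms of $S$ (guaranteed to exist by hypothesis), and let $y_4,\ldots,y_m$ be the remaining non-zero terms (possibly none). First I would assign to each $y_j$ with $j \geq 4$ an arbitrary weight $c_j\in Q_p$, for instance $c_j=1$, and set
\[
t \,=\, c_4 y_4+\cdots+c_m y_m \,\in\, \Z_p.
\]
Now I would apply Lemma \ref{cm} to $y_1,y_2,y_3 \in U(p)$: since $p\geq 7$, we have $y_1 Q_p+y_2 Q_p+y_3 Q_p=\Z_p$, so in particular there exist $b_1,b_2,b_3\in Q_p$ with $b_1 y_1+b_2 y_2+b_3 y_3=-t$. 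Assigning the weight $b_i$ to $y_i$ for $i\in\{1,2,3\}$, the weight $c_j$ to $y_j$ for $j\geq 4$, and the weight $1\in Q_p$ to every zero term of $S$ produces a $Q_p$-weighting of $S$ whose weighted sum is $0$.

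The whole argument is essentially a one-line reduction; there is no real obstacle once Lemma \ref{cm} is in hand. The only subtlety worth flagging is that the definition of $A$-weighted zero-sum requires every weight to lie in $A\subseteq \Z_n\setminus\{0\}$, so weights at the zero terms must still be chosen from $Q_p$ rather than omitted — which is harmless since $1\in Q_p$.
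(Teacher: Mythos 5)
Your proof is correct and follows essentially the same route as the paper: the paper also assigns weight $1$ to all terms other than three distinguished non-zero ones and uses Lemma \ref{cm} to pick weights for those three terms cancelling the remaining sum. No issues.
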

 
\begin{proof}
Let $S=(x_1,\ldots,x_k)$ and let $x_1,x_2,x_3$ be non-zero. By Lemma \ref{cm} there exist $c_1,c_2,c_3\in Q_p$ such that $c_1x_1+c_2x_2+c_3x_3=-(x_4+\cdots+x_k)$.
\end{proof}
 
\begin{lemma}\label{nss}
Let $p\geq 7$ and let $T=(x_1,\ldots,x_k)$ be a sequence in $\Z_p$. If there exist $c_1,\ldots,c_k\in Q_p$ such that $c_1x_1+\cdots +c_kx_k=0$ and $c_1+\cdots +c_k\neq 0$, then $T+(0,0)$ is a $(Q_p,\1)$-weighted zero-sum sequence.
\end{lemma}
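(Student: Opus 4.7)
My plan is to exploit two sources of flexibility: rescaling the given coefficients $c_1,\ldots,c_k$ by a common square in $Q_p$, and freely choosing weights $a_{k+1},a_{k+2}\in Q_p$ for the two appended zeros. For any $q\in Q_p$, replacing each $c_i$ by $qc_i$ keeps all coefficients in $Q_p$ and preserves the weighted sum, since $\sum_{i=1}^k qc_ix_i=q\sum_{i=1}^k c_ix_i=0$, while scaling the coefficient sum from $s:=c_1+\cdots+c_k$ to $qs$. The two appended zeros contribute nothing to the weighted sum regardless of their coefficients, so the $(Q_p,\1)$-weighted zero-sum condition for $T+(0,0)$ collapses to the single scalar equation $sq+a_{k+1}+a_{k+2}=0$ for some $q,a_{k+1},a_{k+2}\in Q_p$.

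This reformulation amounts to asking whether $0\in sQ_p+Q_p+Q_p$. Since $s\neq 0$ implies $s\in U(p)$, Lemma \ref{cm} applied with $x_1=s$ and $x_2=x_3=1$ gives $sQ_p+Q_p+Q_p=\Z_p$, which certainly contains $0$. Picking $q,q_1,q_2\in Q_p$ with $sq+q_1+q_2=0$ and setting $a_i=qc_i$ for $1\leq i\leq k$, $a_{k+1}=q_1$, and $a_{k+2}=q_2$ then produces the required coefficients in $Q_p$, with both weighted sums vanishing by construction. The step I expect to be the only nontrivial one is the initial reduction, i.e., recognizing that the scaling freedom collapses the two-equation $(Q_p,\1)$-system to a single equation directly solvable by Lemma \ref{cm}; the verification afterwards is purely mechanical.
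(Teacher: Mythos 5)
Your proof is correct and is essentially the paper's own argument: the paper also rescales the $c_i$ by a common element $d\in Q_p$ and chooses weights $d_2,d_3\in Q_p$ for the two zeros so that $dc+d_2+d_3=0$, obtaining this from Lemma \ref{cm} (via Corollary \ref{zs} applied to the sequence $(c,1,1)$, where $c=c_1+\cdots+c_k\neq 0$). Your direct invocation of Lemma \ref{cm} with $x_1=s$, $x_2=x_3=1$ is the same step, just without the intermediate corollary.
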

 
\begin{proof}
Let $c=c_1+\cdots +c_k$. Since $c\neq 0$, by Corollary \ref{zs} the sequence $(c,1,1)$ is a $Q_p$-weighted zero-sum sequence. Thus, there exist $d,d_2,d_3\in Q_p$ such that $d\,c+d_2+d_3=0$. Since $d\,(c_1x_1+\cdots +c_kx_k)+d_2\,0+d_3\,0=0$, it follows that $T+(0,0)$ is a $(Q_p,\1)$-weighted zero-sum sequence.
\end{proof}

\begin{lemma}\label{3z}
Let $p\geq 7$ and let $T$ be a sequence in $\Z_p$ which has at least three non-zero terms. Then $T+(0,0,0)$ is a $(Q_p,\1)$-weighted zero-sum sequence.
\end{lemma}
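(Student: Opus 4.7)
The plan is to combine Corollary \ref{zs} (which handles the first weighted condition) with Lemma \ref{cm} (which handles the second via the three extra zeros). The structure of $T+(0,0,0)$ is such that the three appended zeros contribute nothing to the weighted sum but their weights are free parameters in $Q_p$, which must be used to balance the sum of the weights on $T$.

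The concrete steps are as follows. First, since $T$ has at least three nonzero terms and $p\geq 7$, I would apply Corollary \ref{zs} to obtain $c_1,\ldots,c_k \in Q_p$ satisfying $c_1x_1+\cdots+c_kx_k=0$. Let $c=c_1+\cdots+c_k \in \Z_p$. Second, I would apply Lemma \ref{cm} with the choice $x_1=x_2=x_3=1$, which gives $Q_p+Q_p+Q_p=\Z_p$; in particular, there exist $e_1,e_2,e_3\in Q_p$ with $e_1+e_2+e_3=-c$. Third, I would simply observe that the weight tuple $(c_1,\ldots,c_k,e_1,e_2,e_3)\in Q_p^{k+3}$ satisfies both
\[c_1x_1+\cdots+c_kx_k+e_1\cdot 0+e_2\cdot 0+e_3\cdot 0=0,\qquad c_1+\cdots+c_k+e_1+e_2+e_3=c+(-c)=0,\]
so $T+(0,0,0)$ is a $(Q_p,\1)$-weighted zero-sum sequence.

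There is no real obstacle here, since both inputs are already available: Corollary \ref{zs} supplies the $Q_p$-weighted identity without any control on $c$, and Lemma \ref{cm} supplies exactly the freedom to hit an arbitrary target (here $-c$) as a sum of three elements of $Q_p$. No case split on whether $c=0$ is needed, because Lemma \ref{cm} handles every value of $c$ uniformly. Notably, Lemma \ref{nss} is not required for this argument; it would only cover the subcase $c\neq 0$ and would force a separate treatment of $c=0$, whereas the direct route through Corollary \ref{zs} and Lemma \ref{cm} is cleaner. The hypothesis $p\geq 7$ is used precisely in invoking these two results.
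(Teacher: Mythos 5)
Your proof is correct and is essentially the paper's argument: the paper likewise gets $c_1x_1+\cdots+c_kx_k=0$ from Corollary \ref{zs} and then fixes the weight sum $c$ by applying Corollary \ref{zs} to the auxiliary sequence $(c,1,1,1)$ together with the rescaling trick of Lemma \ref{nss}, which unwinds to exactly your direct appeal to Lemma \ref{cm} to write $-c$ as a sum of three elements of $Q_p$. Your version merely bypasses the intermediate rescaling step, and, as you note, no case split on $c=0$ is needed in either formulation.
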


\begin{proof}
Let $T=(x_1,\ldots,x_k)$. By Corollary \ref{zs} there exist $c_1,\ldots,c_k\in Q_p$ such that $c_1x_1+\cdots+c_kx_k=0$. Let $c=c_1+\cdots+c_k$. Again, by Corollary \ref{zs} we see that the sequence $(c,1,1,1)$ is a $Q_p$-weighted zero-sum sequence. By a similar argument as in the proof of Lemma \ref{nss}, we see that $T+(0,0,0)$ is a $(Q_p,\1)$-weighted zero-sum sequence.
\end{proof}

The next result is a conjecture by Erd\H os and Heilbronn which was proved by Dias da Silva and Hamidoune (see \cite{Nat}).

\begin{theorem}\label{eh}
Let $A\subseteq \Z_p$ such that $|A|\geq 2$ and let
\[A\hat +A=\{a+b:a,b\in A\mbox{ and }a\neq b\}.\]
Then either $A\hat +A=\Z_p$ or $|A\hat + A|\geq 2\,|A|-3$.
\end{theorem}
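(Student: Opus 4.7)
The plan is to follow the polynomial-method approach of Alon, Nathanson, and Ruzsa via the Combinatorial Nullstellensatz. Write $k=|A|$ and suppose for contradiction that $A\hat{+}A$ is a proper subset of $\Z_p$ with $|A\hat{+}A|\leq 2k-4$. Because $A\hat{+}A\neq\Z_p$, I can enlarge it to a set $C\subseteq\Z_p$ of size exactly $2k-4$, and consider
\[f(x,y)=(x-y)\prod_{c\in C}(x+y-c)\in\Z_p[x,y].\]
By construction $f(a,b)=0$ for every $(a,b)\in A\times A$: when $a\neq b$ the sum $a+b$ lies in $A\hat{+}A\subseteq C$ and kills a product factor, and when $a=b$ the leading factor $(x-y)$ vanishes.

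The heart of the argument is to verify that the coefficient of $x^{k-1}y^{k-2}$ in $f$ is a nonzero element of $\Z_p$. Only the top-degree piece $(x-y)(x+y)^{2k-4}$ contributes, and a short binomial manipulation gives
\[[x^{k-1}y^{k-2}]\,f=\binom{2k-4}{k-2}-\binom{2k-4}{k-1}=\frac{1}{k-1}\binom{2k-4}{k-2}.\]
In the regime $2k-3\leq p$, where the bound $2|A|-3$ is the nontrivial alternative, we have $2k-4<p$ and $k-1<p$, so both $\binom{2k-4}{k-2}$ and $k-1$ are units in $\Z_p$ and the coefficient is nonzero.

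Now the Combinatorial Nullstellensatz, applied with both variable sets equal to $A$ and target exponents $k-1$ and $k-2$, produces a pair $(a,b)\in A\times A$ with $f(a,b)\neq 0$; this is legal because $|A|=k$ strictly exceeds each exponent and $\deg f=(k-1)+(k-2)=2k-3$. This contradicts the vanishing of $f$ on $A\times A$ and proves the bound. In the complementary regime $2k-3>p$, the same method applies after replacing $|C|=2k-4$ by $|C|=p-1$ and selecting a monomial $x^ay^b$ with $a+b=p$ and $a,b\leq k-1$, which exists precisely because $2(k-1)\geq p$. The main obstacle is the coefficient computation together with verifying its non-vanishing modulo $p$; once the correct polynomial and target monomial are in hand, the rest of the argument is a formal application of the Combinatorial Nullstellensatz.
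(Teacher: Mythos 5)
Your proposal is correct, but it differs from the paper in a structural way: the paper does not prove this theorem at all. It is stated as a known result (the Erd\H os--Heilbronn conjecture, proved by Dias da Silva and Hamidoune) with a pointer to Nathanson's book, where the proof goes through exterior algebra / lower bounds for degrees of symmetric powers. What you have reconstructed is the Alon--Nathanson--Ruzsa polynomial-method proof via the Combinatorial Nullstellensatz, which is shorter and essentially self-contained once the Nullstellensatz is granted. Your key computation is right: only the top form $(x-y)(x+y)^{2k-4}$ contributes to the monomial of total degree $2k-3$, and
\[
[x^{k-1}y^{k-2}]\,f \;=\; \binom{2k-4}{k-2}-\binom{2k-4}{k-1}\;=\;\frac{1}{k-1}\binom{2k-4}{k-2},
\]
which is a nonzero residue when $2k-3\le p$ because all prime factors of this (Catalan) integer are at most $2k-4<p$ and $1\le k-1<p$; the Nullstellensatz hypotheses $|A|=k>k-1\ge k-2$ and $\deg f=2k-3$ are satisfied, so the vanishing of $f$ on $A\times A$ gives the contradiction. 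Two minor points you should still write out to make the argument complete: in the regime $2k-3>p$ you assert but do not verify the non-vanishing of the coefficient of the chosen monomial $x^ay^b$ with $a+b=p$ and $a,b\le k-1$; this follows from $\binom{p-1}{j}\equiv(-1)^j \pmod p$, so the coefficient is $\binom{p-1}{a-1}-\binom{p-1}{a}\equiv\pm 2\not\equiv 0$ since $p$ is odd (and $b\le k-1\le p-1$ forces $a\ge 1$, so the formula applies). Also, the degenerate case $k=2$ is vacuous since $A\hat{+}A\neq\emptyset$. It is worth noting that for the application in this paper one only needs $A=Q_p$ with $|A|=(p-1)/2$, so $2|A|-3=p-4<p$ and only your first regime is ever used.
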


\begin{lemma}\label{nspart}
Let $t\in \Z_p$ and let $z\in Q_p\hat + Q_p$. Suppose $x$ and $x'$ are distinct elements of $Q_p$. Then there exist $c,c'\in Q_p$ such that $cx+c'x'=z$ and $c+c'\neq t$.
\end{lemma}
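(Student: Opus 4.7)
The plan is to exhibit two distinct pairs $(c,c')\in Q_p\times Q_p$ that both satisfy the equation $cx+c'x'=z$, and then observe that the auxiliary condition $c+c'=t$ can hold for at most one of them, leaving a pair with $c+c'\neq t$.

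First, I would reformulate the problem using the fact that $Q_p$ is a subgroup of $U(p)$ and $x,x'\in Q_p$. Under this, the maps $q\mapsto q/x$ and $q\mapsto q/x'$ are bijections of $Q_p$. Hence, substituting $u=cx$ and $v=c'x'$ sets up a bijection between pairs $(c,c')\in Q_p\times Q_p$ with $cx+c'x'=z$ and pairs $(u,v)\in Q_p\times Q_p$ with $u+v=z$. So it suffices to produce two distinct ordered representations of $z$ as a sum of two elements of $Q_p$.

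Second, this is exactly what the hypothesis $z\in Q_p\hat + Q_p$ supplies: there exist distinct $a,b\in Q_p$ with $a+b=z$. Since $a\neq b$, both ordered pairs $(a,b)$ and $(b,a)$ lie in $Q_p\times Q_p$ and sum to $z$. Translating back, I obtain the two candidate pairs
\[
(c_1,c'_1)=(a/x,\,b/x')\quad\text{and}\quad (c_2,c'_2)=(b/x,\,a/x'),
\]
both in $Q_p\times Q_p$ (since $Q_p$ is closed under multiplicative inverses) and both satisfying $c_ix+c'_ix'=z$. These pairs are distinct because $a/x\neq b/x$.

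Third, I would finish with a one-line linear-algebra observation. The system $cx+c'x'=z$, $c+c'=t$ has coefficient matrix $\begin{pmatrix}x & x'\\ 1 & 1\end{pmatrix}$ with determinant $x-x'\neq 0$, so it has a unique solution in $\Z_p\times\Z_p$. In particular, at most one of $(c_1,c'_1)$ and $(c_2,c'_2)$ can satisfy $c+c'=t$; the other is the pair required by the lemma. There is no real obstacle here: the argument is driven entirely by the observation that membership in $Q_p\hat + Q_p$ automatically yields a \emph{pair} of ordered decompositions (by swapping), and that the extra linear constraint $c+c'=t$ eliminates at most one candidate from a list of two.
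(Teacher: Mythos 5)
Your proposal is correct and follows essentially the same route as the paper: both take the decomposition $z=a+b$ with $a\neq b$ in $Q_p$ and form the two swapped pairs $(a/x,\,b/x')$ and $(b/x,\,a/x')$, using $x\neq x'$ to rule out both being bad. The only difference is cosmetic: the paper checks directly that the two sums $a/x+b/x'$ and $b/x+a/x'$ are distinct, while you invoke uniqueness of the solution of the linear system with determinant $x-x'\neq 0$, which is an equally valid way to finish.
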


\begin{proof}
Since $z\in Q_p\hat+Q_p$, there exist $u,u'\in Q_p$ such that $z=u+u'$ and $u\neq u'$. Let $a=u/x$, $a'=u'/x'$, $b=u'/x$, and $b'=u/x'$. We see that $a,a',b,b'\in Q_p$. Also, we have
\begin{equation}\label{qp}
ax+a'x'=z=bx+b'x'.
\end{equation}
So we get that
\[(a-b)x=(b'-a')x'.\]
We see that $a-b=(u-u')/x\neq 0$. So if $a-b=b'-a'$, we get the contradiction that $x=x'$. Hence, it follows that $a-b\neq b'-a'$ and so $a+a'\neq b+b'$. Thus, by \eqref{qp} there exist $c,c'\in Q_p$ such that $cx+c'x'=z$ and $c+c'\neq t$.
\end{proof}

\begin{lemma}\label{ns}
Let $p\geq 11$, let $t\in \Z_p$, and let $z\in U(p)$. Suppose $x$ and $x'$ are distinct elements which belong to the same coset of $Q_p$. Then there exist $c,c'\in Q_p$ such that $cx+c'x'=z$ and $c+c'\neq t$.
\end{lemma}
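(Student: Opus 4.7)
My plan is to deduce Lemma \ref{ns} from Lemma \ref{nspart}, which already handles the case $x, x' \in Q_p$ with $z \in Q_p \hat+ Q_p$. Two extensions are needed: we must allow $x, x' \in N_p$ as well (both lying in the non-residue coset), and we must drop the hypothesis $z \in Q_p \hat+ Q_p$ in favour of $z \in U(p)$. The first extension is routine: fix any $\lambda \in N_p$, write $x = \lambda y$ and $x' = \lambda y'$ with $y, y' \in Q_p$ distinct, and observe that $cx + c'x' = z$ is equivalent to $cy + c'y' = z/\lambda$, while the constraint $c + c' \neq t$ is unaffected. Since $z/\lambda \in U(p)$, this reduces the $N_p$-case to the $Q_p$-case with a new target still in $U(p)$.

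The heart of the proof is showing that $U(p) \subseteq Q_p \hat+ Q_p$ whenever $p \geq 11$. The key observation is that $Q_p \hat+ Q_p$ is a union of orbits of the multiplicative action of $Q_p$ on $\Z_p$: if $z = u + u'$ with distinct $u, u' \in Q_p$ and $q \in Q_p$, then $qz = (qu) + (qu')$ exhibits $qz$ as a sum of two distinct elements of $Q_p$. Those orbits are $\{0\}$, $Q_p$, and $N_p$, of sizes $1$, $(p-1)/2$, and $(p-1)/2$, so $Q_p \hat+ Q_p$ is one of the eight unions of these three sets. On the other hand, since $|Q_p| = (p-1)/2 \geq 5$ for $p \geq 11$, Theorem \ref{eh} gives $|Q_p \hat+ Q_p| \geq 2|Q_p| - 3 = p - 4$. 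For $p \geq 11$, the only such unions with at least $p - 4$ elements are $U(p)$ and $\Z_p$, so $Q_p \hat+ Q_p \supseteq U(p)$, as required.

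Combining the two steps: for $p \geq 11$, $x, x'$ in any common coset of $Q_p$, and $z \in U(p)$, the scaling reduction puts us in the situation $x, x' \in Q_p$ with a target still in $U(p) \subseteq Q_p \hat+ Q_p$, and Lemma \ref{nspart} then produces the desired $c, c' \in Q_p$. The main obstacle is the orbit-size bookkeeping in the second step, and it also explains why $p \geq 11$ is the right threshold: the Erd\H os--Heilbronn bound $p - 4$ first exceeds $(p+1)/2 = |\{0\} \cup Q_p|$ precisely at $p = 11$, so for smaller primes the orbit argument cannot rule out $Q_p \hat+ Q_p$ being too small to contain all of $U(p)$.
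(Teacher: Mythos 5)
Your proposal is correct and follows essentially the same route as the paper: reduce the $N_p$-case to the $Q_p$-case by scaling with a non-residue, show $U(p)\subseteq Q_p\hat{+}Q_p$ for $p\geq 11$ by combining the Erd\H os--Heilbronn bound $|Q_p\hat{+}Q_p|\geq p-4$ with the $Q_p$-multiplicative invariance of $Q_p\hat{+}Q_p$, and then invoke Lemma \ref{nspart}. The only cosmetic difference is that you enumerate the possible unions of $Q_p$-orbits by size, whereas the paper argues by pigeonhole that $Q_p\hat{+}Q_p$ meets (hence contains) each of the cosets $Q_p$ and $N_p$.
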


\begin{proof}
Suppose $x,x'\in Q_p$. Since $|Q_p|=(p-1)/2$, by Theorem \ref{eh} we see that $|Q_p\hat +Q_p|\geq p-4$. Since $p\geq 11$, we see that $|Q_p|\geq 5$.

So it follows that $Q_p\cap (Q_p\hat +Q_p)\neq \emptyset$ and hence there exists $y\in Q_p$ such that $y\in Q_p\hat +Q_p$. Since $y\in Q_p\hat +Q_p$, we observe that $y\,Q_p\subseteq Q_p\hat+Q_p$. Also, since $y\in Q_p$, we have $y\,Q_p=Q_p$. Thus, we see that $Q_p\subseteq Q_p\hat+Q_p$.

Since $|N_p|=|Q_p|$, we see that $N_p\cap (Q_p\hat +Q_p)\neq \emptyset$. So there exists $y'\in N_p$ such that $y'\in Q_p\hat +Q_p$. Since $y'\in Q_p\hat +Q_p$, we have that $y'\,Q_p\subseteq Q_p\hat+Q_p$. Also, since $y'\in N_p$, we have $y'\,Q_p=N_p$. Thus, we see that $N_p\subseteq Q_p\hat+Q_p$.

It follows that $Q_p\cup N_p\subseteq Q_p\hat +Q_p$ and hence $U(p)\subseteq Q_p\hat +Q_p$. Since $z\in U(p)$, it follows that $z\in Q_p\hat+Q_p$. By Lemma \ref{nspart} there exist $c,c'\in Q_p$ such that $cx+c'x'=z$ and $c+c'\neq t$.

Suppose $x,x'\in N_p$. Let $a\in N_p$. We observe that $ax,ax'\in Q_p$ and $az\neq 0$. From the previous case, there exist $c,c'\in Q_p$ such that $c(ax)+c'(ax')=az$ and $c+c'\neq t$. Since $cx+c'x'=z$, we are done.
\end{proof}

\begin{lemma}\label{ns7}
Let $p\geq 11$ and let $S=(x_1,\ldots,x_n)$ be a sequence in $\Z_p$ which has at least one non-zero term. Suppose $x$ and $x'$ are distinct elements which belong to the same coset of $Q_p$. Then there exist $c_1,\ldots,c_n,c,c'\in Q_p$ such that
\[c_1x_1+\cdots+c_nx_n+cx+c'x'=0\mbox{ and }c_1+\cdots+c_n+c+c'\neq 0.\]
\end{lemma}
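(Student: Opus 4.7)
The plan is to reduce the statement directly to Lemma \ref{ns}. The idea is to first pick the weights $c_1,\ldots,c_n\in Q_p$ for the terms $x_1,\ldots,x_n$ subject only to the mild constraint that $s:=c_1x_1+\cdots+c_nx_n$ be non-zero, and then invoke Lemma \ref{ns} with $z=-s$ and a suitable forbidden value $t$ to produce $c,c'\in Q_p$ that both cancel $s$ via $x,x'$ and make the total weight sum non-zero.

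For the first step, reindex so that $x_1\neq 0$, set $c_2=\cdots=c_n=1$, and let $r=x_2+\cdots+x_n$. Then $s=0$ is equivalent to the single equation $c_1=-r/x_1$. Since $p\geq 11$ gives $|Q_p|=(p-1)/2\geq 5$, there exists some $c_1\in Q_p\setminus\{-r/x_1\}$, and with this choice $s\in U(p)$.

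For the second step, set $z=-s\in U(p)$ and $t=-(c_1+\cdots+c_n)\in\Z_p$. By Lemma \ref{ns}, since $x$ and $x'$ are distinct elements of the same coset of $Q_p$, there exist $c,c'\in Q_p$ such that $cx+c'x'=z$ and $c+c'\neq t$. The first equation rearranges to $c_1x_1+\cdots+c_nx_n+cx+c'x'=s+z=0$, and the condition $c+c'\neq t$ is precisely $c_1+\cdots+c_n+c+c'\neq 0$, which is what was required.

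The main obstacle is essentially non-existent here, since all of the delicate combinatorics (the Erd\H{o}s--Heilbronn inequality and the resulting inclusion $U(p)\subseteq Q_p\hat+Q_p$) has already been packaged into Lemma \ref{ns}. What remains is only the elementary pigeonhole step securing $s\neq 0$, for which $|Q_p|\geq 2$ already suffices.
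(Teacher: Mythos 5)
Your proposal is correct and takes essentially the same route as the paper: both arguments first choose $c_1,\ldots,c_n\in Q_p$ so that the partial sum $s=c_1x_1+\cdots+c_nx_n$ is non-zero, and then apply Lemma \ref{ns} to $z=-s$ with the forbidden value $t=-(c_1+\cdots+c_n)$. The only (cosmetic) difference is how the non-zero partial sum is obtained --- the paper notes that $|x_1Q_p+\cdots+x_nQ_p|\geq |Q_p|\geq 5$ so the sumset contains a unit, whereas you fix $c_2=\cdots=c_n=1$ and choose $c_1\in Q_p$ avoiding the single bad value $-r/x_1$.
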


\begin{proof}
Since there exists $i\in [1,n]$ such that $x_i\neq 0$, we see that
\[|x_1Q_p+\cdots +x_nQ_p|\geq |x_iQ_p|=|Q_p|\geq 5.\]
So there exists $z\in U(p)$ such that $z\in x_1Q_p+\cdots +x_nQ_p$. Hence, there exist $c_1,\ldots,c_n\in Q_p$ such that $z=c_1x_1+\cdots +c_nx_n$. Let $t=c_1+\cdots +c_n$. By Lemma \ref{ns} we see that there exist $c,c'\in Q_p$ such that $cx+c'x'=-z$ and $c+c'\neq -t$.
\end{proof}

\begin{lemma}\label{ns'}
Let $S=(x_1,\ldots,x_n)$ be a sequence in $\Z_7$ which has at least two non-zero terms. Suppose $x$ and $x'$ are distinct elements which belong to the same coset of $Q_7$. Then there exist $c_1,\ldots,c_n,c,c'\in Q_7$ such that
\[c_1x_1+\cdots+c_nx_n+cx+c'x'=0\mbox{ and }c_1+\cdots+c_n+c+c'\neq 0.\]
\end{lemma}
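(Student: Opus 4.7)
The plan is to imitate the strategy of Lemma \ref{ns7}, but compensate for the fact that $Q_7\hat + Q_7$ is strictly smaller than $U(7)$ by using the extra non-zero term in the hypothesis (two instead of one). A direct enumeration gives $Q_7=\{1,2,4\}$, $N_7=\{3,5,6\}$, and $Q_7\hat+Q_7=\{3,5,6\}=N_7$. Also $-1=6\in N_7$, so $-N_7=Q_7$. Thus, in order to apply Lemma \ref{nspart} to produce the pair $c,c'$ satisfying $cx+c'x'=-(c_1x_1+\cdots+c_nx_n)$, it is sufficient to arrange that $c_1x_1+\cdots+c_nx_n\in Q_7$.

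First I would reduce to the case $x,x'\in Q_7$. If instead $x,x'\in N_7$, pick any $a\in N_7$; then $ax,ax'$ are distinct elements of $Q_7$ and the sequence $(ax_1,\ldots,ax_n)$ still has at least two non-zero terms, so the $Q_7$-case applied to it yields $c_1,\ldots,c_n,c,c'\in Q_7$ with $c_1(ax_1)+\cdots+c_n(ax_n)+c(ax)+c'(ax')=0$ and $c_1+\cdots+c_n+c+c'\neq 0$; dividing by $a$ gives what we want.

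Assume now $x,x'\in Q_7$. Choose two indices $i\neq j$ with $x_i,x_j\neq 0$, set $c_k=1$ for $k\notin\{i,j\}$, and let $s=\sum_{k\notin\{i,j\}}x_k$. The goal reduces to finding $c_i,c_j\in Q_7$ such that $c_ix_i+c_jx_j\in Q_7-s$, i.e.\ so that $\sum c_kx_k\in Q_7$. The key computational input is that $x_iQ_7+x_jQ_7\supseteq U(7)$ in every case: since $x_iQ_7$ and $x_jQ_7$ each equal either $Q_7$ or $N_7$, one checks by direct enumeration that each of $Q_7+Q_7$, $Q_7+N_7$ and $N_7+N_7$ contains $U(7)$ (indeed $Q_7+N_7=\Z_7$). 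Because $|Q_7-s|=3$ and $\Z_7\setminus(x_iQ_7+x_jQ_7)$ has at most one element, the intersection $(Q_7-s)\cap(x_iQ_7+x_jQ_7)$ is non-empty, producing the desired $c_i,c_j$.

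With $z:=c_1x_1+\cdots+c_nx_n\in Q_7$ and $t:=c_1+\cdots+c_n$, we have $-z\in N_7=Q_7\hat+Q_7$, so Lemma \ref{nspart} supplies $c,c'\in Q_7$ with $cx+c'x'=-z$ and $c+c'\neq -t$, completing the proof. The main obstacle is the failure of $Q_p\hat+Q_p=U(p)$ at $p=7$; the whole point of using two non-zero terms (rather than one as in Lemma \ref{ns7}) is exactly to absorb this deficiency by pushing the partial sum into the specific coset $Q_7$.
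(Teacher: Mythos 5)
Your proof is correct and follows essentially the same route as the paper: reduce the $N_7$ case to the $Q_7$ case by scaling with a non-residue, force the partial sum $z=c_1x_1+\cdots+c_nx_n$ to lie in $Q_7$ so that $-z\in -Q_7=Q_7\hat+Q_7$, and finish with Lemma \ref{nspart}. The only (minor) difference is in how you land $z$ in $Q_7$: the paper applies Cauchy--Davenport to get $|x_1Q_7+\cdots+x_nQ_7|\geq 5$ and intersects with $Q_7$, while you fix all but two weights equal to $1$ and verify by direct enumeration that the two-coset sumset contains $U(7)$ --- an equally valid, slightly more computational substitute.
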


\begin{proof}
Let $x,x'\in Q_7$. Since $S$ has at least two non-zero terms and $|Q_7|=3$, by the Cauchy-Davenport theorem (\cite{Nat}), we see that $|x_1Q_7+\cdots +x_nQ_7|\geq 5$. Since $|x_1Q_7+\cdots +x_nQ_7|+|Q_7|>7$, we see that $(x_1Q_7+\cdots +x_nQ_7)\cap Q_7\neq \emptyset$. Thus, there exists $z\in Q_7$ such that $z\in x_1Q_7+\cdots +x_nQ_7$. Hence, there exist $c_1,\ldots,c_n\in Q_7$ such that $z=c_1x_1+\cdots +c_nx_n$.

We observe that $Q_7 \hat + Q_7=-Q_7$. Hence, we see that $-z\in Q_7\hat + Q_7$. Let $t=c_1+\cdots +c_n$. By Lemma \ref{nspart} there exist $c,c'\in Q_7$ such that $cx+c'x'=-z$ and $c+c'\neq -t$.

Suppose $x,x'\in N_7$. Let $a\in N_7$. We observe that $ax,ax'\in Q_7$ and that the sequence $(ax_1,\ldots,ax_n)$ has at least two non-zero terms. Thus, from the previous case, there exist $c_1,\ldots,c_n,c,c'\in Q_7$ such that
\[c_1(ax_1)+\cdots +c_n(ax_n)+c(ax)+c'(ax')=0\]
and $c_1+\cdots+c_n+c+c'\neq 0$. Since $c_1x_1+\cdots +c_nx_n+cx+c'x'=0$, we are done.
\end{proof}

\begin{theorem}\label{e'q}
We have $E_{Q_p,\1}=p+2$ when $p\neq 5$.
\end{theorem}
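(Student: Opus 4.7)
By Theorem \ref{weak} it suffices to establish the upper bound $E_{Q_p,\1}\leq p+2$. When $p=3$ we have $Q_3=\1$ and the claim collapses to the classical Erd\H{o}s-Ginzburg-Ziv theorem, so from here on I treat $p\geq 7$. Let $S$ be a sequence of length $p+2$ in $\Z_p$; by Remark \ref{trans} I may translate $S$ so that $0$ is a term of maximum multiplicity $m$, and write $k=p+2-m$ for the number of nonzero terms. Since $p+2>|\Z_p|$, we have $m\geq 2$; and if $m\geq p$, the subsequence of $p$ zeros is trivially $(Q_p,\1)$-weighted zero-sum, so from here I assume $k\geq 3$.

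The heart of the proof combines Lemma \ref{ns7} (when $p\geq 11$) or Lemma \ref{ns'} (when $p=7$) with Lemma \ref{nss}. Since $k\geq 3$ and $U(p)$ splits into the two cosets $Q_p$ and $N_p$, pigeonhole produces two distinct nonzero terms $x,x'$ of $S$ lying in a common coset. I earmark $x,x'$ together with two zeros of $S$ and then select $p-4$ further entries from the remaining $p-2$ terms in such a way that the resulting sequence $(x_1,\ldots,x_{p-4})$ has at least one nonzero term (at least two when $p=7$). A short case check on the residual pool of $k-2$ nonzeros and $m-2$ zeros shows such a choice exists in every situation except $p=7$, $k=3$, $m=6$. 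In all other cases, Lemma \ref{ns7}/\ref{ns'} yields $Q_p$-weights making $(x_1,\ldots,x_{p-4},x,x')$ a $Q_p$-weighted zero-sum with nonzero weight sum, and then Lemma \ref{nss} absorbs the two earmarked zeros to produce a $(Q_p,\1)$-weighted zero-sum subsequence of $S$ of length $p$.

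The remaining configuration $p=7$, $k=3$, $m=6$ is handled by a direct argument, since after removing $x,x'$ only one nonzero term of $S$ survives and Lemma \ref{ns'} does not apply. Let $x_1,x_2,x_3$ be the three nonzero terms; by Corollary \ref{zs} there exist $c_1,c_2,c_3\in Q_7$ with $c_1x_1+c_2x_2+c_3x_3=0$, and set $c=c_1+c_2+c_3$. A short consequence of Lemma \ref{cm} is $Q_7+Q_7+Q_7+Q_7=\Z_7$, so I can find $d_1,d_2,d_3,d_4\in Q_7$ satisfying $d_1+d_2+d_3+d_4=-c$. Pairing $x_1,x_2,x_3$ with any four of the six available zeros and using the weights $c_1,c_2,c_3,d_1,d_2,d_3,d_4$ gives a $(Q_7,\1)$-weighted zero-sum subsequence of $S$ of length $7$. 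The principal obstacle throughout is the bookkeeping in paragraph two that isolates this unique exception; once it is identified, the special case is dispatched by hand.
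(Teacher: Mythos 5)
The lower bound, the $p=3$ case, and the overall strategy (reduce to Lemmas \ref{ns7}/\ref{ns'} plus Lemma \ref{nss}) match the paper, but there is a genuine gap at the pigeonhole step. From $k\geq 3$ nonzero terms you cannot conclude that $S$ has two \emph{distinct} nonzero terms $x\neq x'$ in a common coset of $Q_p$: the nonzero terms may consist of repetitions of at most one value from $Q_p$ and at most one value from $N_p$. For instance, with $p=13$ the sequence $S=(0,0,0,0,0,0,u,u,u,u,u,v,v,v,v)$ with $u\in Q_p$, $v\in N_p$ has length $p+2$, zero of maximal multiplicity, and $k=9\geq 3$, yet no two distinct nonzero elements lie in the same coset; Lemmas \ref{ns7} and \ref{ns'} are then unusable, and your case analysis (whose only declared exception is $p=7$, $k=3$, $m=6$) does not cover such sequences. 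Note that distinctness of $x$ and $x'$ is essential in Lemmas \ref{ns}, \ref{ns7}, \ref{ns'} (their proofs break down when $x=x'$), so this is not a cosmetic issue.

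The gap is fixable, and the repair is exactly what the paper does. If no two distinct nonzero elements share a coset, then $S$ has at most two distinct nonzero values, each of multiplicity at most $m$, so $k\leq 2m$ and hence $m\geq (p+2)/3\geq 3$; in that regime one should not hunt for $x,x'$ at all but instead use three zeros: Lemma \ref{3z} says that any sequence with at least three nonzero terms, padded with $(0,0,0)$, is a $(Q_p,\1)$-weighted zero-sum sequence, which immediately produces the length-$p$ subsequence whenever $S$ has at least three zeros and at least three nonzero terms. (Your ad hoc treatment of the $p=7$, $k=3$, $m=6$ case is essentially a special instance of Lemma \ref{3z}'s proof, so you already have the right idea there.) The coset argument is then only needed when $S$ has exactly two zeros, and in that case the paper's counting — $p\geq 7$ nonzero terms, each value occurring at most twice because zero has maximal multiplicity — does legitimately yield two distinct elements in a common coset, after which your use of Lemmas \ref{ns7}/\ref{ns'} and \ref{nss} goes through as written.
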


\begin{proof}
By Theorem \ref{weak} we see that $E_{Q_p,\1}\geq p+2$. Since $Q_3=\1$, by Theorem 2.2 of \cite{KS} we see that $E_{Q_3,\1}=E_{\1,\1}(3)=5$. So we may assume that $p\geq 7$.

Let $S$ be a sequence in $\Z_p$ of length $p+2$. We will show that $S$ has a $(Q_p,\1)$-weighted zero-sum subsequence of length $p$. It will then follow that $E_{Q_p,\1}= p+2$. Suppose $S$ has at most two non-zero terms. Then $S$ has at least $p$ zeroes and hence we get a $(Q_p,\1)$-weighted zero-sum subsequence of length $p$.

So we may assume that $S$ has at least three non-zero terms. Since $S$ has length $p+2$, it follows that $S$ has a term which is repeated. By Remark \ref{trans} we may assume that zero occurs the most number of times as a term of $S$. It follows that $S$ has at least two zeroes.

Suppose $S$ has at least three zeroes. Let $T$ be a subsequence of $S-(0,0,0)$ of length $p-3$ which has  at least three non-zero terms. By Lemma \ref{3z} we see that $T+(0,0,0)$ is a $(Q_p,\1)$-weighted zero-sum sequence. Since $T+(0,0,0)$ has length $p$, we are done.
 
So we may assume that $S$ has exactly two zeroes. Let $T$ be a subsequence of $S-(0,0)$ of length $p-2$. Then all the terms of $T$ are non-zero. Since $p\geq 7$, we see that $T$ has at least five terms. It follows that at least three terms of $T$ are in the same coset of $Q_p$. At least two of these three terms are distinct, since zero occurs the most number of times as a term of $S$.

Let $T=(x_1,\ldots,x_k)$ where $k=p-2$. By Lemmas \ref{ns7} and \ref{ns'} there exist $c_1,\ldots,c_k\in Q_p$ such that $c_1x_1+\cdots+c_kx_k=0$ and $c_1+\cdots+c_k\neq 0$. Hence, by Lemma \ref{nss} we see that $T+(0,0)$ is a $(Q_p,\1)$-weighted zero-sum sequence.
\end{proof}

\section{Determining the value of $C_{Q_p,\1}$}

\begin{lemma}\label{l2}
Let $p\geq 7$ and let $p\equiv 1~(\emph{mod}~4)$. Let $S$ be a sequence in $\Z_p$ which has at least two zeroes and at least three non-zero terms. Then $S$ is a $(Q_p,\1)$-weighted zero-sum sequence.
\end{lemma}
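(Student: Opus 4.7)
The strategy is to assign $Q_p$-weights $c_1,\ldots,c_n$ to the non-zero terms of $S$ so that the weighted sum vanishes (which is possible by Corollary~\ref{zs}, since there are at least three non-zero terms), and then to exploit the at-least-two zero terms of $S$ to correct the sum of all the weights to $0$.

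Write $S$ as the concatenation of its subsequence $T=(x_1,\ldots,x_n)$ of non-zero terms (with $n\geq 3$) and $r\geq 2$ zero terms. By Corollary~\ref{zs}, choose $c_1,\ldots,c_n\in Q_p$ with $c_1x_1+\cdots+c_nx_n=0$, and let $s=c_1+\cdots+c_n\in \Z_p$. The key fact, valid whenever $p\geq 7$ and $p\equiv 1\pmod 4$ (so in fact $p\geq 13$ and $|Q_p|\geq 6$), is that $Q_p+Q_p=\Z_p$. Indeed, $-1\in Q_p$ because $p\equiv 1\pmod 4$, so for any $q\in Q_p$ the elements $q$ and $-q$ are distinct and sum to $0$, giving $0\in Q_p\hat +Q_p$; and the argument used in the proof of Lemma~\ref{ns}, which only needs $|Q_p|\geq 5$ and Theorem~\ref{eh}, yields $Q_p\cup N_p=U(p)\subseteq Q_p\hat +Q_p$. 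Combining these two facts, $Q_p\hat +Q_p=\Z_p$ and hence $Q_p+Q_p=\Z_p$.

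To finish, assign the weight $1\in Q_p$ to $r-2$ of the zero terms, and use $Q_p+Q_p=\Z_p$ to pick $d_1,d_2\in Q_p$ with $d_1+d_2=-s-(r-2)$. The full collection of weights then lies in $Q_p$, sums to $s+(r-2)+d_1+d_2=0$, and yields weighted sum $c_1x_1+\cdots+c_nx_n+d_1\cdot 0+d_2\cdot 0+\cdots=0$ on $S$; so $S$ is a $(Q_p,\1)$-weighted zero-sum sequence. The main obstacle is the step $Q_p+Q_p=\Z_p$; the hypothesis $p\equiv 1\pmod 4$ is essential to this, as it is precisely what forces $0$ into the sumset, while the Erd\H os--Heilbronn ingredient handles $U(p)$.
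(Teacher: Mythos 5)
Your proof is correct, but it takes a different route from the paper's. The paper removes two zeroes, applies Corollary \ref{zs} to the remaining sequence $T$ to get weights with $c_1x_1+\cdots+c_kx_k=0$, and then splits into cases on $c=c_1+\cdots+c_k$: if $c\neq 0$ it invokes Lemma \ref{nss}, which rescales the whole relation by some $d\in Q_p$ and weights the two zeroes using a $Q_p$-weighted zero-sum representation of $(c,1,1)$ coming from the three-squares result (Lemma \ref{cm}); if $c=0$ it notes that $(0,0)$ is itself a $(Q_p,\1)$-weighted zero-sum sequence because $-1\in Q_p$. You avoid both the case split and Lemma \ref{nss}: you keep the weights on the non-zero terms fixed, give weight $1$ to all but two zeroes, and correct the weight sum using the sumset fact $Q_p+Q_p=\Z_p$, which you derive from $-1\in Q_p$ (for $0\in Q_p\hat+Q_p$) together with the Erd\H{o}s--Heilbronn argument from the proof of Lemma \ref{ns} (for $U(p)\subseteq Q_p\hat+Q_p$); your observation that the hypotheses force $p\geq 13$, hence $|Q_p|\geq 6$, makes that argument legitimately applicable. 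The trade-off: the paper's proof uses only the lighter three-squares ingredient and no Erd\H{o}s--Heilbronn, while yours is a one-shot argument with no case analysis and isolates the reusable statement $Q_p+Q_p=\Z_p$ for $p\equiv 1\ (\mathrm{mod}\ 4)$, $p\geq 13$. All steps check out (the weights on zero terms contribute nothing to the weighted sum, and your choice $d_1+d_2=-s-(r-2)$ exactly kills the weight sum), so there is no gap.
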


\begin{proof}
Let $T=S-(0,0)$. Suppose $T=(x_1,\ldots,x_k)$. By Corollary \ref{zs} there exist $c_1,\ldots,c_k\in Q_p$ such that $c_1x_1+\cdots+c_kx_k=0$. Let $c=c_1+\cdots+c_k$. If $c\neq 0$, by Lemma \ref{nss} we see that $T+(0,0)$ is a $(Q_p,\1)$-weighted zero-sum sequence.

Suppose $c=0$. We see that $T$ is a $(Q_p,\1)$-weighted zero-sum sequence. Since $p\equiv 1~(\text{mod}~4)$, we see that $-1\in Q_p$. Thus, we see that $(0,0)$ is a $(Q_p,\1)$-weighted zero-sum sequence. Hence, it follows that $T+(0,0)$ is a $(Q_p,\1)$-weighted zero-sum sequence.
\end{proof}

\begin{remark}
The statement of Lemma \ref{l2} is not valid when $p=5$, since $Q_5=\{1,-1\}$ and the sequence $(1,1,1,0,0)$ in $\Z_5$ is not a $\{1,-1\}$-weighted zero-sum sequence.
\end{remark}

\begin{corollary}\label{nsc}
Let $p\geq 11$. Suppose $x_1$, $x_2$, and $x_3$ are distinct non-zero elements in $\Z_p$. Then there exist $c_1,c_2,c_3\in Q_p$ such that $c_1x_1+c_2x_2+c_3x_3=0$ and $c_1+c_2+c_3\neq 0$.
\end{corollary}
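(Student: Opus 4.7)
The plan is to reduce this to a single application of Lemma \ref{ns} after a pigeonhole step. Since the units $U(p)$ split into exactly two cosets of $Q_p$, namely $Q_p$ and $N_p$, among the three distinct non-zero elements $x_1,x_2,x_3$ at least two must lie in the same coset. After relabeling, I may assume $x_2$ and $x_3$ are distinct elements belonging to the same coset of $Q_p$.

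Next I would fix an arbitrary $c_1\in Q_p$ (for instance $c_1=1$) and set $z=-c_1x_1$ and $t=-c_1$. Because $c_1$ and $x_1$ are both units, $z\in U(p)$. Since $p\geq 11$ and $x_2,x_3$ are distinct elements of the same coset of $Q_p$, Lemma \ref{ns} applies and produces $c_2,c_3\in Q_p$ with
\[c_2x_2+c_3x_3=z \quad\text{and}\quad c_2+c_3\neq t.\]
Then
\[c_1x_1+c_2x_2+c_3x_3=c_1x_1+z=0,\]
while $c_1+c_2+c_3=c_1+(c_2+c_3)\neq c_1+t=0$, which is exactly the desired conclusion.

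I do not anticipate any real obstacle: the hypothesis $p\geq 11$ lines up precisely with the hypothesis of Lemma \ref{ns}, and the only genuine case-work, namely whether $x_2,x_3\in Q_p$ or $x_2,x_3\in N_p$, is already absorbed into Lemma \ref{ns}. The pigeonhole step is the only place the hypothesis that $x_1,x_2,x_3$ are \emph{distinct} is used, and it is used only to guarantee that the two elements in the repeated coset are distinct from each other.
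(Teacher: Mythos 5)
Your proof is correct and is essentially the paper's argument: the paper also begins with the same pigeonhole observation and then invokes Lemma \ref{ns7} (with $n=1$), whose proof is exactly your step of producing a unit $z\in x_1Q_p$ and applying Lemma \ref{ns} with $t$ chosen to rule out a zero weight-sum. The only difference is that you inline that special case of Lemma \ref{ns7} by fixing $c_1=1$, which changes nothing of substance.
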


\begin{proof}
We observe that given three distinct non-zero elements in $\Z_p$, at least two of them will belong to the same coset of $Q_p$. The result now follows from Lemma \ref{ns7}.
\end{proof}

\begin{remark}\label{cqp}
In this section, we will show that
\[C_{Q_p,\1}=
\begin{cases}
6, & \text{if }p\equiv 1~(\text{mod}~4); \\
9, & \text{if }p\equiv 3~(\text{mod}~4).
\end{cases}
\]
\end{remark}

\begin{observation}\label{qp1}
We see that every $(Q_p,\1)$-weighted zero-sum sequence has length at least two. Suppose there exists a $(Q_p,\1)$-weighted zero-sum sequence having length two. Then there exist $a,b\in Q_p$ such that $a+b=0$. Thus, we see that $-1\in Q_p$ which implies that $p\equiv 1~(\text{mod }4)$. Hence, if $p\equiv 3~(\text{mod }4)$, every $(Q_p,\1)$-weighted zero-sum sequence has length at least three.
\end{observation}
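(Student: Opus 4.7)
The plan is to verify each of the three assertions of the observation directly from the definitions, using one classical fact from elementary number theory.

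First I would rule out length one. A single-term sequence $(x_1)$ being $(Q_p,\1)$-weighted zero-sum requires a weight $a_1\in Q_p$ with $a_1=0$: indeed, the $\1$-weighted condition on the weights forces $a_1+\cdots+a_k=0$, and when $k=1$ this is just $a_1=0$. But $Q_p\subseteq U(p)$, so $0\notin Q_p$. Hence every $(Q_p,\1)$-weighted zero-sum sequence has length at least two.

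Next I would analyze the length-two case. Suppose $(x_1,x_2)$ is a $(Q_p,\1)$-weighted zero-sum sequence with weights $a,b\in Q_p$. The condition $a+b=0$ gives $b=-a$, and since $Q_p$ is a subgroup of $U(p)$, the ratio $-1=b\cdot a^{-1}$ must also lie in $Q_p$.

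The only non-routine step is then to invoke Euler's criterion: for an odd prime $p$, one has $-1\in Q_p$ if and only if $p\equiv 1\pmod 4$. From this I would read off the desired conclusion by contraposition. If $p\equiv 3\pmod 4$, then $-1\notin Q_p$, so no length-two $(Q_p,\1)$-weighted zero-sum sequence exists, and combined with the length-one case this forces every such sequence to have length at least three. The argument has no real obstacle; it is a short bookkeeping exercise in the definitions, with Euler's criterion as the only external input.
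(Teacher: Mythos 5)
Your argument is correct and matches the paper's own reasoning in Observation \ref{qp1}: length one is impossible since the weight condition would force a weight $a_1=0\notin Q_p$, a length-two sequence forces $-1\in Q_p$, and Euler's criterion then pins this to $p\equiv 1~(\text{mod }4)$, giving the conclusion for $p\equiv 3~(\text{mod }4)$ by contraposition. No further comment is needed.
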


\begin{theorem}\label{lb}
We have $C_{Q_p,\1}\geq 6$. Further, $C_{Q_p,\1}\geq 9$ when $p\equiv 3~(\emph{mod}~4)$.
\end{theorem}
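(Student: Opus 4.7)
The plan is to prove the two lower bounds by exhibiting explicit witness sequences, using a different construction for each residue class of $p$ modulo $4$. For $p\equiv 1\pmod 4$ I would use the length-$5$ sequence $S=(0,1,0,y,0)$ with $y\in N_p$ (which exists since $p$ is odd), yielding $C_{Q_p,\1}\geq 6$. For $p\equiv 3\pmod 4$ I would use the length-$8$ sequence $T=(1,1,0,1,1,0,1,1)$, yielding $C_{Q_p,\1}\geq 9$, which in particular implies $C_{Q_p,\1}\geq 6$.

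For the sequence $S$, since $p\equiv 1\pmod 4$ we have $-1\in Q_p$ and hence $-y\in N_p$. The ten consecutive subsequences of $S$ of length at least two fall into three groups. Every length-$2$ window has two distinct terms $x\neq x'$, and the conditions $a+b=0$ and $ax+bx'=0$ force $a(x-x')=0$, so $a=0\notin Q_p$. The windows $(0,1,0)$ and $(0,y,0)$ touch only one non-zero position and give a weighted-sum equation of the form $a_i\,z=0$ with $z\in\{1,y\}$, forcing $a_i=0$. The remaining four windows $(1,0,y)$, $(0,1,0,y)$, $(1,0,y,0)$ and $(0,1,0,y,0)$ touch both non-zero positions, and their weighted-sum equation reduces to $b+dy=0$ where $b,d\in Q_p$ are the coefficients at the positions carrying $1$ and $y$; this gives $b/d=-y$, which is impossible since $b/d\in Q_p$ while $-y\in N_p$.

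For the sequence $T$, every term lies in $\{0,1\}$, so the weighted-sum equation becomes $\sum_{y_\ell=1}a_\ell=0$, and combined with $\sum a_\ell=0$ this is equivalent to the $a$'s at the zero-positions summing to $0$ \emph{and} the $a$'s at the one-positions summing to $0$. Since these partial sums are over disjoint index sets, a window is a $(Q_p,\1)$-weighted zero-sum iff $Q_p$ contains both an $n_0$-term and an $n_1$-term zero-sum, where $n_0$ and $n_1$ count the zeros and ones in the window (with an empty sum being $0$). For $p\equiv 3\pmod 4$ neither a one-element nor a two-element $Q_p$-zero-sum exists, since $0\notin Q_p$ and $-1\notin Q_p$. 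I would then check the twenty-eight windows of length at least two: $T$ has only two zeros in total, so any window with $n_0\geq 1$ has $n_0\in\{1,2\}$, and since the maximal run of $1$'s in $T$ has length $2$, the only windows with $n_0=0$ are the three copies of $(1,1)$, for which $n_1=2$. In every case the required small zero-sum is unavailable.

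The main effort is simply organising the case analysis; the underlying obstructions are the unavailability of small $Q_p$-zero-sums, supplemented in Case~1 by the forbidden ratio $-y\in N_p$. The arithmetic within each window is routine, so I would verify it by the structural observations above rather than grinding through every window by hand.
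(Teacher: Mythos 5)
Your proof is correct, but it takes a more self-contained route than the paper. The paper derives both bounds from one citation: since $C_{Q_p}=3$ (Theorem 4 of \cite{SKS1}), there is a pair $S'=(x,y)$ with no $Q_p$-weighted zero-sum subsequence, and padding it with zeroes --- $(0,x,0,y,0)$ for the bound $6$, and $(0,0,x,0,0,y,0,0)$ for the bound $9$ when $p\equiv 3\pmod 4$ --- suffices, because by Observation \ref{qp1} any qualifying window has length at least $2$ (resp.\ at least $3$), hence meets a nonzero term, which would hand $S'$ a forbidden $Q_p$-weighted zero-sum subsequence. You instead write down explicit witnesses and verify them directly: for $p\equiv 1\pmod 4$ your $(0,1,0,y,0)$ with $y\in N_p$ is in effect an explicit instance of the paper's padded pair (the pair $(1,y)$ is extremal for $C_{Q_p}$ precisely because $-y\in N_p$), while for $p\equiv 3\pmod 4$ your $(1,1,0,1,1,0,1,1)$ is structurally different from the paper's witness: rather than isolating two nonzero terms between blocks of zeroes, you use a sequence with terms in $\{0,1\}$, for which the two defining equations decouple into independent $Q_p$-zero-sums over the zero-positions and the one-positions, and then observe that $0\notin Q_p$ and $-1\notin Q_p$ rule out all $1$- and $2$-term $Q_p$-zero-sums. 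Both arguments rest on the same obstructions ($0\notin Q_p$, and $-1\notin Q_p$ when $p\equiv 3\pmod 4$); yours buys independence from \cite{SKS1} at the cost of a residue-class case split (which is harmless, since your bound $9$ covers the bound $6$ when $p\equiv 3\pmod 4$), whereas the paper's argument is shorter and uniform in $p$. One trivial point to add: windows of length one must also be excluded, which is immediate since a single term being a $(Q_p,\1)$-weighted zero-sum would force a weight equal to $0\notin Q_p$; your decoupling observation already covers this for the second witness, and it is the paper's Observation \ref{qp1}.
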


\begin{proof}
By Theorem 4 of \cite{SKS1} we have that  $C_{Q_p}=3$. Thus, there exists a sequence $S'=(x,y)$ in $\Z_p$ which does not have any $Q_p$-weighted zero-sum subsequence. Consider the sequence $S=(0,x,0,y,0)$. Suppose $T$ is a $(Q_p,\1)$-weighted zero-sum subsequence of consecutive terms of $S$. By Observation \ref{qp1} we see that $T$ has length at least two. This gives the contradiction that $S'$ has a $Q_p$-weighted zero-sum subsequence. Thus, we see that $S$ does not have any $(Q_p,\1)$-weighted zero-sum subsequence of consecutive terms. Hence, it follows that $C_{Q_p,\1}\geq 6$.
 
Let $p\equiv 3~(\text{mod}~4)$. Consider the sequence $S=(0,0,x,0,0,y,0,0)$. Suppose $T$ is a $(Q_p,\1)$-weighted zero-sum subsequence of consecutive terms of $S$. By Observation \ref{qp1} we see that $T$ has length at least three. This gives the contradiction that $S'$ has a $Q_p$-weighted zero-sum subsequence. Thus, we see that $S$ does not have any $(Q_p,\1)$-weighted zero-sum subsequence of consecutive terms. Hence, it follows that $C_{Q_p,\1}\geq 9$.
\end{proof}

\begin{lemma}\label{ls}
Let $p\equiv 1~(\emph{mod}~4)$. Suppose $x$ and $y$ are in the same coset of $Q_p$. Then $0\in x\,Q_p+y\,Q_p$.
\end{lemma}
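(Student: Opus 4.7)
The plan is to exhibit explicit $c, c' \in Q_p$ satisfying $cx + c'y = 0$, since this is exactly what it means for $0$ to lie in $xQ_p + yQ_p$. Rewriting the desired equality as $cx = -c'y$ and using that $x, y \in U(p)$ (implicit in the hypothesis that they lie in a coset of $Q_p$), the task reduces to expressing the element $-y/x$ as a quotient of two elements of $Q_p$.

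Now I would invoke both hypotheses. The assumption that $x$ and $y$ belong to the same coset of $Q_p$ in $U(p)$ says precisely that $y/x \in Q_p$. The assumption $p \equiv 1 \pmod 4$ gives the standard fact that $-1$ is a quadratic residue, i.e., $-1 \in Q_p$. Since $Q_p$ is a subgroup of $U(p)$, it is closed under multiplication, so $-y/x = (-1) \cdot (y/x) \in Q_p$. Taking $c = -y/x$ and $c' = 1$, both elements of $Q_p$, yields $cx + c'y = -y + y = 0$, as required.

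There is essentially no obstacle here: the lemma unwinds to a one-line group-theoretic check once both hypotheses are applied. The only subtle point is ensuring that the statement makes sense, namely that $x$ and $y$ are nonzero units, which is built into the phrase ``same coset of $Q_p$'' (the cosets of $Q_p$ being subsets of $U(p)$).
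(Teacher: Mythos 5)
Your proof is correct and rests on the same two facts as the paper's argument ($-1\in Q_p$ when $p\equiv 1 \pmod 4$, and closure of $Q_p$ under multiplication); the only difference is that you use the ratio $y/x\in Q_p$ directly and take weights $-y/x$ and $1$, which handles the cases $x,y\in Q_p$ and $x,y\in N_p$ uniformly, whereas the paper first treats $x,y\in Q_p$ with weights $x^{-1}$ and $-y^{-1}$ and then reduces the $N_p$ case to it by multiplying through by an element of $N_p$. This is a minor streamlining of the same method, not a different route.
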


\begin{proof}
Suppose $x,y\in Q_p$. Let $a=x^{-1}$ and $b=-y^{-1}$. Since $-1\in Q_p$, we see that $a,b\in Q_p$. Also, we have that $ax+by=0$.

Suppose $x,y\in N_p$. Let $c\in N_p$, let $x'=cx$, and $y'=cy$. We see that $x',y'\in Q_p$. By the previous case, there exist $a,b\in Q_p$ such that $ax'+by'=0$. Hence, we see that $ax+by=0$.
\end{proof}

\begin{theorem}\label{c1}
We have $C_{Q_p,\1}=6$ when $p\equiv 1~(\emph{mod}~4)$ and $p\neq 5$.
\end{theorem}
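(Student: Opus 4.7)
The plan is to establish the upper bound $C_{Q_p,\1}\leq 6$, since the matching lower bound is already given by Theorem~\ref{lb}. Given any $S=(x_1,\ldots,x_6)$ in $\Z_p$, the goal is to exhibit a $(Q_p,\1)$-weighted zero-sum subsequence of consecutive terms. If two adjacent entries $x_i,x_{i+1}$ coincide, the length-two subsequence $(x_i,x_{i+1})$ is already $(Q_p,\1)$-weighted zero-sum via the coefficients $(1,-1)$, which is valid because $p\equiv 1~(\text{mod}~4)$ forces $-1\in Q_p$. Otherwise, by Remark~\ref{trans} translate $S$ so that $0$ is the most frequent value, with multiplicity $k$; since no two zeros are adjacent, $k\leq 3$, and if $k\in\{2,3\}$, then $S$ has at least two zeros and at least three non-zero terms, so Lemma~\ref{l2} gives that $S$ itself is $(Q_p,\1)$-weighted zero-sum.

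The remaining case is $k=1$: all six terms of $S$ are distinct and $x_{i_0}=0$ for a unique $i_0$. Here I target a length-four consecutive subsequence containing the zero. After reversing $S$ if necessary (which preserves both consecutiveness and the zero-sum property), assume $i_0\leq 3$, so $T=(x_{i_0},x_{i_0+1},x_{i_0+2},x_{i_0+3})=(0,y_1,y_2,y_3)$ is a valid length-four window with $y_1,y_2,y_3$ distinct and non-zero. For $T$ to be $(Q_p,\1)$-weighted zero-sum it suffices to find $c_2,c_3,c_4\in Q_p$ with $c_2y_1+c_3y_2+c_4y_3=0$ and $c_2+c_3+c_4\in Q_p$, as then $c_1=-(c_2+c_3+c_4)$ also lies in $Q_p$ (using $-1\in Q_p$).

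The proof therefore hinges on the following strengthening of Corollary~\ref{nsc}, which I expect to be the main obstacle: for any three distinct non-zero $y_1,y_2,y_3\in\Z_p$, there exist $c_1,c_2,c_3\in Q_p$ with $c_1y_1+c_2y_2+c_3y_3=0$ and $c_1+c_2+c_3\in Q_p$ (not merely nonzero). My approach is to pair two of the $y_i$'s that lie in the same coset of $Q_p$, say $y_2$ and $y_3$ (possible by pigeonhole on three non-zero residues), fix $c_1\in Q_p$, and parameterize the pairs $(c_2,c_3)\in Q_p^2$ solving $c_2y_2+c_3y_3=-c_1y_1$; under this linear substitution the induced values of $c_2+c_3$ form an affine image of the solution set. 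Since $p\geq 13$ (because $p\equiv 1~(\text{mod}~4)$ and $p\neq 5$), and since varying $c_1$ through $Q_p$ supplies $(p-1)/2$ further shifts of the target coset, the value $c_1+c_2+c_3$ can be forced into $Q_p$ for some choice. A rigorous argument will likely combine a refined use of Lemma~\ref{ns} with a case split on the coset memberships of $y_1,y_2,y_3$, invoking Lemma~\ref{ls} on same-coset pairs; once the strengthened Corollary is in place, Theorem~\ref{c1} follows immediately from the length-four reduction above.
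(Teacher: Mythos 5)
Your reduction steps are fine up to the last case: the adjacent-equal-terms observation, the translation via Remark \ref{trans}, the bound $k\leq 3$ on the multiplicity of zero, and the use of Lemma \ref{l2} when $k\in\{2,3\}$ all work (and essentially reproduce the paper's Case 1). The problem is the case $k=1$, where your entire argument rests on the ``strengthened Corollary \ref{nsc}'': that for any three distinct non-zero $y_1,y_2,y_3$ there exist $c_1,c_2,c_3\in Q_p$ with $c_1y_1+c_2y_2+c_3y_3=0$ and $c_1+c_2+c_3\in Q_p$. You do not prove this, and the sketch you give does not come close. The tools available (Lemma \ref{nspart}, Lemma \ref{ns}) only let you \emph{avoid one prescribed value} of the coefficient sum $c+c'$ for a fixed target $z$ -- they produce two representations with distinct coefficient sums -- whereas you need to \emph{force} $c_1+c_2+c_3$ into a set of size $(p-1)/2$. ``Varying $c_1$ through $Q_p$ supplies further shifts'' is not an argument: nothing rules out that all achievable sums miss $Q_p$, and settling this honestly would require genuinely new input (e.g.\ character-sum estimates), plus separate checks for small $p$ such as $p=13,17$. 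So as written there is a real gap at the heart of the remaining case.

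The gap is also avoidable: you boxed yourself in by insisting on a length-four consecutive window around the unique zero, which leaves only three non-zero terms and hence no slack. The paper instead uses the \emph{whole} length-six sequence (trivially consecutive). With all five non-zero terms distinct, it picks two in the same coset of $Q_p$ and uses Lemma \ref{ls} to get $b_1y_1+b_2y_2=0$ with $b=b_1+b_2\neq 0$, applies Corollary \ref{nsc} to the other three non-zero terms to get $a_1x_1+a_2x_2+a_3x_3=0$ with $a=a_1+a_2+a_3\neq 0$, and then uses the zero term as a free coefficient: by Corollary \ref{zs} there are $c_1,c_2,c_3\in Q_p$ with $c_1a+c_2b+c_3=0$, so the coefficients $c_1a_i,\,c_2b_j,\,c_3$ exhibit $S$ as a $(Q_p,\1)$-weighted zero-sum sequence. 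Note this only needs the two partial coefficient sums to be \emph{non-zero}, never that any sum lies in $Q_p$; that is exactly the strengthening you could not supply, and it is why the paper's route closes the case while yours does not.
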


\begin{proof}
By Theorem \ref{lb} we see that $C_{Q_p,\1}\geq 6$. Let $S$ be a sequence in $\Z_p$ of length six. We want to show that $S$ has a $(Q_p,\1)$-weighted zero-sum subsequence of consecutive terms. By Remark \ref{trans} we may assume that zero occurs the most number of times as a term of $S$.

\noindent
\texttt{Case 1:} The sequence $S$ has at least two zeroes.

If at least three terms of $S$ are non-zero, then by Lemma \ref{l2} we see that $S$ is a $(Q_p,\1)$-weighted zero-sum sequence. So we may assume that at most two terms of $S$ are non-zero. Since $S$ has length six, it follows that $S$ has consecutive zeroes. Since $-1\in Q_p$, we see that $(0,0)$ is a $(Q_p,\1)$-weighted zero-sum subsequence of $S$.

\noindent
\texttt{Case 2:} The sequence $S$ has exactly one zero.

Since zero occurs the most number of times as a term of $S$, it follows that all the terms of $S$ are distinct. Since $S$ has five non-zero terms, we observe that at least two terms $y_1$ and $y_2$ of $S$ are in the same coset of $Q_p$. Let $x_1,x_2$, and $x_3$ be the other non-zero terms of $S$.

By Lemma \ref{ls} there exist $b_1,b_2\in Q_p$ such that $b_1y_1+b_2y_2=0$. As $y_1\neq y_2$, it follows that $b_1\neq -b_2$ and so $b_1+b_2\neq 0$. By Corollary \ref{nsc} there exist $a_1,a_2,a_3\in Q_p$ such that $a_1x_1+a_2x_2+a_3x_3=0$ and $a_1+a_2+a_3\neq 0$.

Let $a=a_1+a_2+a_3$ and $b=b_1+b_2$. Since $a$ and $b$ are non-zero, by Corollary \ref{zs} there exist $c_1,c_2,c_3\in Q_p$ such that $c_1a+c_2b+c_3=0$. Also, since $c_1(a_1x_1+a_2x_2+a_3x_3)+c_2(b_1y_1+b_2y_2)+c_30=0$, it follows that $S$ is a $(Q_p,\1)$-weighted zero-sum sequence.
\end{proof}

\begin{theorem}
We have $C_{Q_5,\1}=6$.
\end{theorem}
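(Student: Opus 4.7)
The plan is to combine the lower bound $C_{Q_5,\1}\geq 6$ from Theorem \ref{lb} with a direct argument that every length-$6$ sequence $S=(s_1,\ldots,s_6)$ in $\Z_5$ has a consecutive $(Q_5,\1)$-weighted zero-sum subsequence. By Remark \ref{trans} I may assume $0$ occurs the most number of times in $S$, say with multiplicity $f$; since $|\Z_5|=5<6$, pigeonhole forces $f\geq 2$. If two zeros of $S$ are adjacent, then $(0,0)$ is a $(Q_5,\1)$-weighted zero-sum because $-1\in Q_5$, so I may assume no two zeros are adjacent, which forces $f\in\{2,3\}$.

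I will use the following short-length $(Q_5,\1)$-weighted zero-sum criteria in $\Z_5$, obtained by enumerating the coefficient choices in $\{\pm 1\}$ that sum to $0\pmod{5}$: length $2$ is a weighted zero-sum iff the two terms are equal; no length-$3$ weighted zero-sum exists; length $4$ is a weighted zero-sum iff the four terms split into two pairs with equal sums; length $5$ is a weighted zero-sum iff the five terms sum to $0$ (Lemma \ref{q51}); length $6$ is a weighted zero-sum iff the six positions split into two $3$-subsets with equal value-sums. Two facts I use repeatedly are that $(x,0,x,0)$ and $(0,x,0,x)$ are always length-$4$ weighted zero-sums, and that the length-$6$ criterion for $S$ depends only on the multiset of entries of $S$, not on the specific placement of the zeros.

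For $f=3$ the zero positions form one of the independent sets $\{1,3,5\},\{1,3,6\},\{1,4,6\},\{2,4,6\}$ in the path on six vertices, and I check each pattern against each placement of the multiset of non-zero values, which is either $\{x,x,x\}$, $\{x,x,y\}$ with $x\neq y$, or three distinct values. The case $\{x,x,x\}$ is handled by a consecutive $(x,x)$, $(0,x,0,x)$ or $(x,0,x,0)$ present in each pattern. For three distinct non-zero values, each of the four subsets of $\Z_5\setminus\{0\}$ of size three admits some combination $\pm x\pm y\pm z=0$, which gives a length-$6$ weighted zero-sum. The delicate subcase is $\{x,x,y\}$ in arrangements separating the two $x$'s by $y$ (for example $(0,x,0,y,0,x)$ in the pattern $\{1,3,5\}$); combining the length-$4$, length-$5$, and length-$6$ criteria shows that a weighted zero-sum exists iff $y\in\{-x,2x,-2x\}$, which always holds because $\Z_5\setminus\{0,x\}=\{2x,3x,4x\}=\{-x,2x,-2x\}$.

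For $f=2$ I show a length-$6$ weighted zero-sum always exists. Writing $T=a+b+c+d$ for the sum of the four non-zero entries, the length-$6$ criterion reduces to the multiset condition that $T=0$, or $T/2$ is an entry, or some pair of entries sums to $T/2$. The admissible multisets (each non-zero value appearing at most twice) are $\{1,2,3,4\}$ (for which $T\equiv 0$), the six multisets $\{x,x,y,y\}$ with $x\neq y$ (for which the pair $\{x,y\}$ has sum $x+y=T/2$), and the twelve multisets $\{x,x,y,z\}$ with $x,y,z$ distinct, each of which a short computation in $\Z_5$ verifies to meet one of the three conditions. I expect the main obstacle to be the $\{x,x,y\}$ analysis inside $f=3$: unlike the other parts of the argument, it genuinely requires combining the length-$4$, length-$5$, and length-$6$ criteria simultaneously, and it is there that the small-field identity $\Z_5\setminus\{0,x\}=\{-x,2x,-2x\}$ is what finally closes the case.
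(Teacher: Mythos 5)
Your argument is correct, but it takes a genuinely different route from the paper's. The paper, after the same translation step and the observation that two equal consecutive terms finish the proof (since $-1\in Q_5$), splits according to whether $S$ has exactly three or exactly four non-zero terms: with three non-zero terms alternating with three zeroes it invokes $C_{Q_5}=3$ (Theorem 4 of \cite{SKS1}) on the non-zero subsequence and pads the resulting consecutive $Q_5$-weighted zero-sum with the interleaved zeroes, giving each zero the negative of an adjacent weight so that the weight sum stays $0$; with four non-zero terms it invokes Lemma 2 of \cite{CM} to represent $0$, re-chooses weights for the three exceptional multisets $\{1,1,4,4\}$, $\{2,2,3,3\}$, $\{1,2,3,4\}$ where all weights might be equal, and then uses $Q_5+Q_5=\Z_5\setminus Q_5$ to push the non-zero weight sum onto the two zeroes. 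You instead exploit $Q_5=\{\pm1\}$ to write down explicit criteria for $(Q_5,\1)$-weighted zero-sums of each length ($2,4,5,6$; none of length $3$) and run an exhaustive check over zero-patterns (independent sets in the path on six vertices) and value multisets; your delicate $(x,y,x)$ placements and the identity $\Z_5\setminus\{0,x\}=\{-x,2x,-2x\}$ do close those cases, as I verified. What each approach buys: the paper's proof is shorter and reuses known results, while yours is self-contained and makes the combinatorics completely explicit, at the cost of a heavier case analysis with several verifications left as short computations. One small caveat: in your $f=2$ step, listing $T=0$ as a sufficient condition for the length-$6$ criterion is not literally correct --- the precise criterion is that some entry, or some pair of entries, sums to $T/2$ (the multiset $\{1,1,1,2\}$ has $T=0$ but admits no balanced split) --- yet this is harmless here, since the only admissible multiset you justify that way, $\{1,2,3,4\}$, contains the pair $\{1,4\}$ summing to $T/2=0$.
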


\begin{proof}
By Theorem \ref{lb} we see that $C_{Q_5,\1}\geq 6$. Let $S$ be a sequence in $\Z_5$ having length six. Then $S$ has a repeated term. By Remark \ref{trans} we may assume that zero occurs the most number of times as a term of $S$. It follows that $S$ has at least two zeroes. We want to show that $S$ has a $(Q_5,\1)$-weighted zero-sum subsequence of consecutive terms.

Suppose two consecutive terms of $S$ are equal. Since $-1\in Q_5$, we are done. So we may assume that no two consecutive terms of $S$ are equal. Since $S$ has no consecutive zeroes, it follows that $S$ has at least three non-zero terms.

\noindent
{\tt Case 1:} The sequence $S$ has at most three non-zero terms.

We see that either $S=(x_1,0,x_2,0,x_3,0)$ or $S=(0,x_1,0,x_2,0,x_3)$ where $x_1,x_2,x_3$ are non-zero terms. Let $S'=(x_1,x_2,x_3)$. By Theorem 4 of \cite{SKS1} we have $C_{Q_5}=3$. Hence, the sequence $S'$ has a $Q_5$-weighted zero-sum subsequence $T'$ of consecutive terms. We see that $T'$ has length at least two.

Suppose $T'=(x_1,x_2)$. There exist $a_1,a_2\in Q_5$ such that $a_1x_1+a_2x_2=0$. So we get that $a_1x_1-a_10+a_2x_2-a_20=0$ and hence $T=(x_1,0,x_2,0)$ is a $(Q_5,\1)$-weighted zero-sum subsequence of $S$. We can use a similar argument when $T'=(x_2,x_3)$ and when $T'=S'$.

\noindent
{\tt Case 2:} The sequence $S$ has at least four non-zero terms.

Since $S$ has at least two zeroes, we see that $S$ has exactly four non-zero terms, say $x_1,x_2,x_3,x_4$. By Lemma 2 of \cite{CM} there exist $c_i$'s in $Q_5$ such that $c_1x_1+\cdots +c_4x_4=0$.

If all the $c_i$'s are equal, then $x_1+x_2+x_3+x_4=0$. Let $T=S-(0,0)$. Since every term of $S$ is repeated at most twice, by considering the various possibilities for the $x_i$'s we see that $T$ is a permutation of either $(2,2,3,3)$, $(1,1,4,4)$, or $(1,2,3,4)$. For each of these sequences we can find $c_i$'s in $Q_5$ which are not all equal such that $c_1x_1+\cdots +c_4x_4=0$.

So we may assume that not all the $c_i$'s are  equal. Let $c=c_1+c_2+c_3+c_4$. Since $Q_5=\{1,-1\}$, we see that $c\notin Q_5$. Since $Q_5+Q_5=\mathbb Z_5\setminus Q_5$, there exist $d_1,d_2\in Q_5$ such that $c=d_1+d_2$. Since $c_1x_1+\cdots +c_4x_4-d_1 0-d_2 0=0$, we see that $S$ is a $(Q_5,\1)$-weighted zero-sum sequence.
\end{proof}

\begin{theorem}\label{c7}
We have $C_{Q_p,\1}=9$ when $p\equiv 3~(\emph{mod}~4)$ and $p\neq 7$.
\end{theorem}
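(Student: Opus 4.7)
The approach is to establish the matching upper bound $C_{Q_p,\1}\leq 9$, since Theorem \ref{lb} already gives $C_{Q_p,\1}\geq 9$ for $p\equiv 3\pmod 4$. The small case $p=3$ reduces to $Q_3=\1$ and a statement about $C_{\1,\1}(3)$ from \cite{KS}, so I focus on $p\geq 11$. Let $S$ be a length-$9$ sequence in $\Z_p$; by Remark \ref{trans} I may assume $0$ occurs the most often in $S$, so that the number $k$ of zeros in $S$ satisfies $k\geq 1$ and every other value appears at most $k$ times. I will exhibit a consecutive subsequence of $S$ that is a $(Q_p,\1)$-weighted zero-sum sequence in each case.

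First I assemble a small toolkit of ``atomic'' consecutive windows. Atom A is a constant triple $(x,x,x)$ for any $x\in\Z_p$: applying Corollary \ref{zs} to $(1,1,1)$ produces $c_1,c_2,c_3\in Q_p$ with $c_1+c_2+c_3=0$, so $c_1x+c_2x+c_3x=0$; in particular three consecutive zeros always suffice. Atom B is a window with at least three zeros and at least three non-zero terms, handled by Lemma \ref{3z} after absorbing any extra zeros into the $T$ part. Atom C is a window with at least two zeros, at least three non-zero terms, and two distinct non-zero entries in a common coset of $Q_p$, handled by Lemma \ref{ns7} followed by Lemma \ref{nss}.

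The case analysis splits on $k$. If $3\leq k\leq 6$ then $W=S$ itself has at least three zeros and at least three non-zero terms, so Atom B applies. If $k\in\{7,8,9\}$, the $9-k$ non-zero entries partition the $k$ zeros into at most $10-k$ runs, forcing one run of length $\geq\lceil k/(10-k)\rceil\geq 3$; then $S$ has three consecutive zeros and Atom A applies. If $k=2$, the seven non-zero entries each appear at most twice, so $S$ contains at least four distinct non-zero values, and pigeonhole on the two cosets of $Q_p$ yields two distinct values in a common coset; Atom C then applies to $W=S$.

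The genuine obstacle is $k=1$, where the nine terms of $S$ are distinct and none of Atoms A, B, C applies directly. Here I would adapt the partitioning strategy of Case 2 of Theorem \ref{c1}: pigeonhole on the eight non-zero distinct values of $S$ across the two cosets of $Q_p$ gives at least four in a common coset, so at least one pair $(y_1,y_2)$ of distinct same-coset elements is available, together with three further distinct non-zero values $x_1,x_2,x_3$; I would use Lemma \ref{ns} to produce $b_1,b_2\in Q_p$ realizing $b_1y_1+b_2y_2$ as a chosen value while controlling $b_1+b_2$ (say to avoid $-(a_1+a_2+a_3)$), combine with Corollary \ref{nsc} applied to $(x_1,x_2,x_3)$, and glue the two partial sums through the lone zero of $S$ using Corollary \ref{zs} on an appropriate triple. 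Since the $p\equiv 1\pmod 4$ tool Lemma \ref{ls} is unavailable here, the coset residue of $b_1+b_2$ must be controlled (not merely its non-vanishing) through a careful choice of the same-coset pair among the several available; this is where the argument is most sensitive.
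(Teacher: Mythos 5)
Your reduction for $k\geq 2$ zeros is sound: your Atoms A and B dispose of $k\geq 3$, and for $k=2$ applying Lemma \ref{ns7} followed by Lemma \ref{nss} to all of $S$ (which is trivially a block of consecutive terms, with at least four distinct non-zero values forced by the multiplicity bound) is correct, and in fact a bit more direct than the paper, which instead passes to a consecutive window of length $7$ or $8$ and splits its non-zero part into two triples via Corollary \ref{nsc}. But the case you yourself flag, $k=1$, is a genuine gap, and it is the heart of the theorem. As written you never fix a consecutive window, and your pair-plus-triple $(y_1,y_2,x_1,x_2,x_3)$ assigns weights to only five of the non-zero terms in play, leaving the rest unaccounted for; moreover the route you sketch really does hit the obstruction you mention: with only one zero available, a Lemma \ref{nss}-style finish forces the weight-sum $t$ of the non-zero terms to satisfy $-t\in Q_p$, i.e.\ $t\in N_p$ since $p\equiv 3\pmod 4$, and Lemma \ref{ns7} (or Lemma \ref{ns}) only guarantees $t\neq 0$, with no control of its residue class. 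You do not resolve this, so the proof is incomplete precisely where the new idea is needed.

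The paper's resolution avoids residue control altogether. For $k=1$ take a consecutive window $T$ of length seven containing the unique zero; its six non-zero terms are distinct (zero occurs most often), so they split into two triples of distinct non-zero elements. Corollary \ref{nsc} (valid as $p\geq 11$) gives weights making each triple a $Q_p$-weighted zero-sum with non-zero weight-sums $a$ and $b$. Now apply Corollary \ref{zs} to the triple $(a,b,1)$, all of whose entries are non-zero: there exist $c_1,c_2,c_3\in Q_p$ with $c_1a+c_2b+c_3=0$. Rescaling the first triple's weights by $c_1$, the second's by $c_2$, and giving the zero term the weight $c_3$ makes $T$ a $(Q_p,\1)$-weighted zero-sum sequence: the weighted sum is $c_1(a_1x_1+a_2x_2+a_3x_3)+c_2(b_1y_1+b_2y_2+b_3y_3)+c_3\cdot 0=0$ and the weight-sum is $c_1a+c_2b+c_3=0$. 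The point is that the lone zero contributes a free unit $1$ on the weight-sum side, and three non-zero entries $(a,b,1)$ are enough for Lemma \ref{cm} to apply, so no quadratic-residue bookkeeping is required. Patching your $k=1$ case with this argument, while keeping your treatment of $k\geq 2$, completes the proof.
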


\begin{proof}
By Theorem 2.4 of \cite{KS} we see that $C_{Q_3,\1}=9$, since $Q_3=\{1\}$. So we may assume that $p\geq 11$. By Theorem \ref{lb} we have that $C_{Q_p,\1}\geq 9$. Let $S$ be a sequence in $\Z_p$ having length nine. By Remark \ref{trans} we may assume that zero occurs the most number of times as a term of $S$.

If $S$ has at least three zeroes and at least three non-zero terms, by Lemma \ref{3z} we see that $S$ is a $(Q_p,\1)$-weighted zero-sum sequence. If at most two terms of $S$ are non-zero, then $T=(0,0,0)$ is a subsequence of consecutive terms of $S$. By Corollary \ref{zs} the sequence $(1,1,1)$ is a $Q_p$-weighted zero-sum sequence. So there exist $a,b,c\in Q_p$ such that $a+b+c=0$. Also, as $a0+b0+c0=0$, it follows that $T$ is a $(Q_p,\1)$-weighted zero-sum sequence.

So we may assume that $S$ has at most two zeroes. It follows that no term of $S$ is repeated more than twice.

\noindent
{\tt Case 1:} Either the sequence $S$ has exactly one zero, or both the first and last terms of $S$ are zeroes.

There exists a subsequence $T$ of consecutive terms of $S$ having length seven which has exactly one zero. Since no term of $S$ is repeated more than twice, there exist subsequences $T_1=(x_1,x_2,x_3)$ and $T_2=(y_1,y_2,y_3)$ of $T-(0)$ such that both $T_1$ and $T_2$ have no repeated terms and $T-(0)$ is a permutation of $T_1+T_2$. Since $p\geq 11$, by Corollary \ref{nsc} there exist $a_i$'s and $b_i$'s in $Q_p$ such that
\[a_1x_1+a_2x_2+a_3x_3=0,~b_1y_1+b_2y_2+b_3y_3=0,~a_1+a_2+a_3\neq 0,~b_1+b_2+b_3\neq 0.\]

Let $a=a_1+a_2+a_3$ and $b=b_1+b_2+b_3$. By Corollary \ref{zs} the sequence $(a,b,1)$ is a $Q_p$-weighted zero-sum sequence. So there exist $c_1,c_2,c_3\in Q_p$ such that $c_1a+c_2b+c_3=0$. Also, since
\[c_1(a_1x_1+a_2x_2+a_3x_3)+c_2(b_1y_1+b_2y_2+b_3y_3)+c_30=0,\]
it follows that $T$ is a $(Q_p,\1)$-weighted zero-sum sequence.
 
\noindent
{\tt Case 2:} The sequence $S$ has exactly two zeroes, and either the first term or the last term of $S$ is non-zero.

There exists a subsequence $T$ of consecutive terms of $S$ having length eight which has exactly two zeroes. Let $T_1$, $T_2$, $a$, and $b$ be defined as in the previous case. By Corollary \ref{zs} the sequence $(a,b,1,1)$ is a $Q_p$-weighted zero-sum sequence. Now, by a similar argument as in the previous case, it follows that $T$ is a $(Q_p,\1)$-weighted zero-sum sequence.
\end{proof}

\begin{theorem}
We have $C_{Q_7,\1}=9$.
\end{theorem}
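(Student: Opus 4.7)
The lower bound $C_{Q_7,\1} \geq 9$ comes from Theorem \ref{lb}, since $7 \equiv 3 \pmod 4$. For the upper bound, let $S$ be a sequence in $\Z_7$ of length nine. By Remark \ref{trans} I may assume zero occurs the most number of times as a term of $S$, and since $9 > 7$ pigeonhole then forces zero to appear at least twice. My plan is to split on how many zeros appear in $S$ and, in each case, to exhibit either $S$ itself or a block of three consecutive zeros as a $(Q_7,\1)$-weighted zero-sum subsequence of consecutive terms.

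If $S$ has at least three zeros and at least three non-zero terms, then removing three zeros yields a sequence with at least three non-zero terms; Lemma \ref{3z} gives that appending three zeros again produces a $(Q_7,\1)$-weighted zero-sum sequence, and since this property is permutation-invariant, $S$ itself has it. If $S$ has at most two non-zero terms, then $S$ has at least seven zeros split into at most three blocks, so one block has three consecutive zeros; the coefficient triple $(1,2,4)$ (which satisfies $1+2+4\equiv 0\pmod 7$) then makes $(0,0,0)$ a $(Q_7,\1)$-weighted zero-sum sequence.

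The main case is that $S$ has exactly two zeros and hence seven non-zero terms. Since zero is the most frequent value, every non-zero value of $S$ occurs at most twice, so $S$ has at least $\lceil 7/2\rceil = 4$ distinct non-zero values. Because $U(7)$ is the disjoint union of $Q_7$ and $N_7$, each of size three, pigeonhole produces two distinct values $x,x'$ of $S$ lying in the same coset of $Q_7$. I then apply Lemma \ref{ns'} to the sequence of the remaining five non-zero terms together with the pair $(x,x')$, obtaining $Q_7$-coefficients weighting all seven non-zero terms to zero with nonzero coefficient sum. Lemma \ref{nss} then says that appending the two zeros of $S$ gives a $(Q_7,\1)$-weighted zero-sum sequence; by permutation-invariance $S$ itself is $(Q_7,\1)$-weighted zero-sum.

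The hard part will be this last case. The triple-splitting approach of Theorem \ref{c7} (via Corollary \ref{nsc}) does not extend to $p=7$: triples such as $\{1,5,6\}$ admit only those $Q_7$-annihilators that are permutations of $(1,2,4)$, all of which sum to zero modulo seven. The key observation I rely on is that Lemma \ref{ns'} is already valid for $p=7$ and can be applied directly to the full collection of seven non-zero terms, yielding the required $Q_7$-relation with nonzero coefficient sum in one step and bypassing the need to split into triples.
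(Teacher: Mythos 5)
Your proof is correct, and in the decisive case it is genuinely leaner than the paper's. Both arguments share the same frame: the lower bound from Theorem \ref{lb}, translation via Remark \ref{trans} so that zero is the most frequent term, and the easy cases (at least three zeroes with at least three non-zero terms via Lemma \ref{3z}; at most two non-zero terms via a consecutive block $(0,0,0)$ and a relation such as $1+2+4\equiv 0$ in $Q_7$). The difference is in the case of exactly two zeroes and seven non-zero terms. The paper pigeonholes four terms into one coset of $Q_7$, applies Lemma \ref{ns'} to that quadruple, handles the remaining three non-zero terms with Corollary \ref{zs}, and then glues the two partial relations and the two zeroes together by finding a $Q_7$-weighted zero-sum for $(a,b,1,1)$. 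You instead note that at least four distinct non-zero values occur, so some two distinct terms lie in a common coset, and you apply Lemma \ref{ns'} once to all seven non-zero terms (the other five playing the role of the auxiliary sequence, which certainly has two non-zero terms); Lemma \ref{nss} then absorbs the two zeroes, and permutation-invariance of the weighted zero-sum property finishes, since the whole sequence is trivially a consecutive subsequence of itself. This buys a shorter argument with no recombination step, at the cost of nothing: all the lemmas you invoke are stated for $p=7$ with exactly the hypotheses you use. Your side remark explaining why the triple-splitting of Theorem \ref{c7} cannot work at $p=7$ (e.g.\ the triple $1,5,6$ only admits $Q_7$-annihilators that are permutations of $(1,2,4)$, all summing to zero) is accurate and is precisely the reason the paper isolates $p=7$ in a separate theorem.
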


\begin{proof}
By Theorem \ref{lb} we have that $C_{Q_7,\1}\geq 9$. Let $S$ be a sequence in $\Z_7$ having length nine. By Remark \ref{trans} we may assume that zero occurs the most number of times as a term of $S$. Thus, at least two terms of $S$ must be zero. If $S$ has at least three zeroes, we use a similar argument as in the second paragraph of the proof of Theorem \ref{c7}.

So we may assume that $S$ has exactly two zeroes. Let $T=S-(0,0)$. Since $|Q_7|=3$, there exists a subsequence $T_1=(x_1,x_2,x_3,x_4)$ of $T$ such that all the terms of $T_1$ are in the same coset of $Q_7$. Since every term of $S$ is repeated at most twice, there exist two terms of $T_1$ which are distinct. By Lemma \ref{ns'} we see that there exist $a_i$'s in $Q_7$ such that $a_1x_1+a_2x_2+a_3x_3+a_4x_4=0$ and $a_1+a_2+a_3+a_4\neq 0$.

Let $T-T_1=(y_1,y_2,y_3)$. By Corollary \ref{zs} there exist $b_i$'s in $Q_7$ such that $b_1y_1+b_2y_2+b_3y_3=0$. Let $a=a_1+a_2+a_3+a_4$ and $b=b_1+b_2+b_3$. By Corollary \ref{zs} the sequence $(a,b,1,1)$ is a $Q_7$-weighted zero-sum sequence. So there exist $c_i$'s in $Q_7$ such that $c_1a+c_2b+c_3+c_4=0$. Also, since
\[c_1(a_1x_1+a_2x_2+a_3x_3+a_4x_4)+c_2(b_1y_1+b_2y_2+b_3y_3)+c_30+c_40=0,\]
it follows that $S$ is a $(Q_7,\1)$-weighted zero-sum sequence.
\end{proof}

\section{Determining the value of $D_{Q_p,\1}$}

\begin{remark}\label{dqp}
In this section, we will show that
\[D_{Q_p,\1}=
\begin{cases}
4\text{ or }5, & \text{if }p\equiv 1~(\text{mod}~4); \\
5, & \text{if }p\equiv 3~(\text{mod}~4).
\end{cases}
\]
\end{remark}

\begin{theorem}\label{dqp2}
We have $D_{Q_p,\1}=4$ or $5$.
\end{theorem}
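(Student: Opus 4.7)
The plan is to prove the two bounds $D_{Q_p,\1} \leq 5$ and $D_{Q_p,\1} \geq 4$ separately. For the upper bound I will imitate the Chevalley-Warning argument of Theorem \ref{weak}, but now using only two equations (since $D$-constants impose no length condition on the subsequence). For the lower bound I will exhibit a length-three sequence with no $(Q_p,\1)$-weighted zero-sum subsequence, splitting on $p \bmod 4$.

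For $D_{Q_p,\1} \leq 5$, let $S = (a_1, \ldots, a_5)$ be any sequence in $\Z_p$ and consider the system
\[
a_1 X_1^2 + \cdots + a_5 X_5^2 = 0, \qquad X_1^2 + \cdots + X_5^2 = 0
\]
over $\Z_p$. The total degree is $2 + 2 = 4 < 5$, so by Chevalley-Warning there is a non-trivial common zero $(b_1, \ldots, b_5)$. Set $J = \{i \in [1,5] : b_i \neq 0\}$. The case $|J| = 1$ is impossible because the second equation would force $b_j^2 = 0$ for the unique $j \in J$, so $|J| \geq 2$. The subsequence $(a_i)_{i \in J}$ of $S$, with weights $b_i^2 \in Q_p$, satisfies $\sum_{i\in J} b_i^2 a_i = 0$ and $\sum_{i\in J} b_i^2 = 0$, so it is a $(Q_p,\1)$-weighted zero-sum subsequence.

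For $D_{Q_p,\1} \geq 4$, I construct a length-three sequence that admits no $(Q_p,\1)$-weighted zero-sum subsequence. If $p \equiv 3 \pmod 4$, then by Observation \ref{qp1} any such subsequence must have length at least three, so $S = (0, 0, x)$ with $x$ non-zero works: the only candidate is $S$ itself, which would require $cx = 0$ for some $c \in Q_p$, impossible. If $p \equiv 1 \pmod 4$, the pair $(0,0)$ is itself a $(Q_p,\1)$-weighted zero-sum subsequence (using $-1 \in Q_p$), so the counterexample can contain at most one zero; I take $S = (x, y, 0)$ with $x \in Q_p$ and $y \in N_p$. The pairs $(x,0)$ and $(y,0)$ trivially fail; the pair $(x,y)$ needs $a+b = 0$ forcing $b = -a$ and then $a(x-y) = 0$, impossible since $x \neq y$; and a length-three relation $a_1 x + a_2 y = 0$ with $a_1, a_2 \in Q_p$ forces $a_2 = -a_1 x / y$, which lies in $Q_p \cdot Q_p \cdot N_p = N_p$ rather than $Q_p$, a contradiction.

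The main subtlety is the $p \equiv 1 \pmod 4$ case of the lower bound: because $-1 \in Q_p$ already makes $(0,0)$ a $(Q_p,\1)$-weighted zero-sum, one must work a little harder to find a length-three obstruction, and using two non-zero elements drawn from distinct cosets of $Q_p$ is essential. Since the theorem only asserts membership in $\{4,5\}$, no further narrowing is attempted at this stage.
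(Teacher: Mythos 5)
Your proof is correct. The upper bound is exactly the paper's argument: the same two-equation Chevalley--Warning system in five variables, with the squares of a non-trivial solution serving as the $Q_p$-weights (your extra remark that $|J|\geq 2$ is true but not needed; $J\neq\emptyset$ already suffices). Where you diverge is the lower bound. The paper quotes $D_{Q_p}=3$ from \cite{AMP} to get an abstract pair $(x,y)$ with no $Q_p$-weighted zero-sum subsequence, appends a single zero, and uses Observation \ref{qp1} to rule out short subsequences uniformly in $p$; you instead build explicit length-three obstructions, splitting on $p\bmod 4$: $(0,0,x)$ when $p\equiv 3\pmod 4$ (where $-1\notin Q_p$ kills $(0,0)$) and $(x,y,0)$ with $x\in Q_p$, $y\in N_p$ when $p\equiv 1\pmod 4$ (where $0\notin xQ_p+yQ_p$ because $-x/y\in N_p$). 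Your route is self-contained --- it does not rely on the cited value of $D_{Q_p}$, and for $p\equiv 1\pmod 4$ it effectively reconstructs the extremal pair behind that citation --- at the cost of a case split that the paper avoids. One cosmetic point: in the $p\equiv 1\pmod 4$ case you only discuss the pairs and the full triple; you should also note (as Observation \ref{qp1} does) that length-one subsequences are never $(Q_p,\1)$-weighted zero-sum, since a single weight in $Q_p$ cannot sum to zero. This is immediate and does not affect correctness.
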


\begin{proof}
Let $S=(a_1,\ldots,a_5)$ be a sequence in $\Z_p$. We consider the following system of equations in the  variables $X_1,\ldots,X_5$ over the field $\Z_p$.
\[a_1X_1^2\,+\,\cdots\,+\,a_5X_5^2=0,\hspace{1cm}X_1^2\,+\,\cdots\,+X_5^2=0.\]
By the Chevalley-Warning Theorem \cite{S}, this system has a non-trivial solution $(b_1,\ldots,b_5)$. Let $J=\{i\in[1,5]:b_i\neq 0\}$. Let $T$ be the subsequence of $S$ such that $a_i$ is a term of $T$ if and only if $i\in J$. As $J\neq \emptyset$, it follows that $T$ is a $(Q_p,\1)$-weighted zero-sum sequence. Thus, we see that $D_{Q_p,\1}\leq 5$.

From \cite{AMP} we see that $D_{Q_p}=3$. So there exists a sequence $S'=(x,y)$ which has no $Q_p$-weighted zero-sum subsequence. Let $S=S'+(0)$. Suppose $T$ is a $(Q_p,\1)$-weighted zero-sum subsequence of $S$. By Observation \ref{qp1} we see that $T$ has length at least two. This gives the contradiction that $S'$ has a $Q_p$-weighted zero-sum subsequence. Thus, we see that $S$ does not have any $(Q_p,\1)$-weighted zero-sum subsequence. Hence, it follows that $D_{Q_p,\1}\geq 4$.
\end{proof} 
 
\begin{theorem}\label{dqp3}
We have $D_{Q_p,\1}=5$ when $p\equiv 3~(\emph{mod}~4)$.
\end{theorem}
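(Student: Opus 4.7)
The plan is to leverage Theorem \ref{dqp2}, which already gives $D_{Q_p,\1}\leq 5$, and strengthen the lower bound from $4$ to $5$ using the extra structure afforded by $p\equiv 3\pmod 4$. The key additional tool is Observation \ref{qp1}: when $-1\notin Q_p$, no sequence of length two can be $(Q_p,\1)$-weighted zero-sum. This matching condition on weights is what the proof of Theorem \ref{dqp2} had no way to exploit.

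For the lower bound I would take the same two-term sequence $S'=(x,y)$ used in the proof of Theorem \ref{dqp2}---a sequence in $\Z_p$ with no $Q_p$-weighted zero-sum subsequence, whose existence follows from $D_{Q_p}=3$ (see \cite{AMP})---and consider the padding $S=S'+(0,0)=(x,y,0,0)$ of length four. The claim is that $S$ has no $(Q_p,\1)$-weighted zero-sum subsequence. By Observation \ref{qp1}, any such subsequence would have length at least three, so it suffices to inspect the subsequences of $S$ of length three and four.

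A $(Q_p,\1)$-weighted zero-sum relation on the full sequence $S$ would take the form $ax+by+c\cdot 0+d\cdot 0=0$ with $a,b,c,d\in Q_p$, which gives $ax+by=0$ and makes $(x,y)$ a $Q_p$-weighted zero-sum subsequence of $S'$, contradicting the choice of $S'$. The length-three sub-multisets are $\{x,y,0\}$, $\{x,0,0\}$, and $\{y,0,0\}$. The subsequence $(x,y,0)$ is ruled out by the same $ax+by=0$ argument. For $(x,0,0)$ a $(Q_p,\1)$-weighted zero-sum relation would force $ax=0$ with $a\in Q_p\subseteq U(p)$, hence $x=0$; but then $(x)$ would be a $Q_p$-weighted zero-sum subsequence of $S'$, again a contradiction. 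The case $(y,0,0)$ is symmetric. Therefore $S$ witnesses $D_{Q_p,\1}\geq 5$, and together with Theorem \ref{dqp2} this yields $D_{Q_p,\1}=5$.

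The argument is essentially a padding trick, so there is no serious obstacle. The only delicate point is ensuring that length-two subsequences are handled automatically---this is precisely where the hypothesis $p\equiv 3\pmod 4$ enters, via Observation \ref{qp1}---and noting that this same hypothesis is what obstructs running the same construction when $p\equiv 1\pmod 4$, explaining why Remark \ref{dqp} can only assert $D_{Q_p,\1}\in\{4,5\}$ in that case.
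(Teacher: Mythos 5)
Your proposal is correct and is essentially the paper's proof: the same padding $S=(x,y,0,0)$ built from an extremal sequence for $D_{Q_p}=3$, the same use of Observation \ref{qp1} to force any $(Q_p,\1)$-weighted zero-sum subsequence to have length at least three, and the same appeal to Theorem \ref{dqp2} for the upper bound. You merely spell out the length-three and length-four cases that the paper compresses into one sentence.
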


\begin{proof} 
From \cite{AMP} we see that $D_{Q_p}=3$. So there exists a sequence $S'=(x,y)$ which has no $Q_p$-weighted zero-sum subsequence. Let $S=S'+(0,0)$. Suppose $T$ is a $(Q_p,\1)$-weighted zero-sum subsequence of $S$. By Observation \ref{qp1} we see that $T$ has length at least three. This gives the contradiction that $S'$ has a $Q_p$-weighted zero-sum subsequence. Thus, we see that $S$ does not have any $(Q_p,\1)$-weighted zero-sum subsequence. Hence, it follows that $D_{Q_p,\1}\geq 5$. Thus, from Theorem \ref{dqp2} we are done.
\end{proof}

\section{$(Q_p,B)$-weighted zero-sum constants}

\begin{theorem}\label{eqp}
Let $B\subseteq \Z_p$ where $p\neq 5$. We have $E_{Q_p,B}=p+2$.
\end{theorem}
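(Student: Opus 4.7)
The plan is to establish the two-sided bound $p+2\leq E_{Q_p,B}\leq p+2$, with both directions reducing immediately to results already proved in the paper: the lower bound uses the Adhikari--Rath value $E_{Q_p}=p+2$ that was invoked in Theorem \ref{weak}, and the upper bound uses Theorem \ref{e'q}, which supplies $E_{Q_p,\1}=p+2$ for $p\neq 5$.

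For the lower bound, I would simply note that if a sequence is $(Q_p,B)$-weighted zero-sum via weights $a_i\in Q_p$ and $b_i\in B$ with $\sum a_ix_i=0$ and $\sum b_ia_i=0$, then discarding the second relation shows the sequence is already $Q_p$-weighted zero-sum. Hence $E_{Q_p}\leq E_{Q_p,B}$, and the Adhikari--Rath value gives $E_{Q_p,B}\geq p+2$.

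For the upper bound, the key observation is that a $(Q_p,\1)$-weighted zero-sum condition is strong enough to imply the $(Q_p,B)$-weighted condition for any non-empty $B$: if a length-$p$ subsequence $(x_1,\ldots,x_p)$ is $(Q_p,\1)$-weighted zero-sum via $a_1,\ldots,a_p\in Q_p$ with $\sum a_ix_i=0$ and $\sum a_i=0$, then fixing any $b\in B$ and setting $b_i=b$ for every $i$ gives $\sum b_ia_i=b\sum a_i=0$. So the same subsequence also witnesses a $(Q_p,B)$-weighted zero-sum subsequence of length $p$, which shows $E_{Q_p,B}\leq E_{Q_p,\1}$. Invoking Theorem \ref{e'q} (valid because $p\neq 5$) then yields $E_{Q_p,B}\leq p+2$.

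Since Theorem \ref{e'q} has already absorbed all of the combinatorial and Chevalley--Warning input, there is no substantive obstacle left at this stage; the only small point to flag is that the argument implicitly uses $B\neq\emptyset$, which is part of the paper's standing hypothesis on weight sets.
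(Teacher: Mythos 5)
Your proposal is correct and is essentially the paper's own proof: both bounds come from the same two comparisons, namely $E_{Q_p}\leq E_{Q_p,B}$ (dropping the second relation) together with $E_{Q_p}=p+2$ from Adhikari--Rath, and $E_{Q_p,B}\leq E_{Q_p,\1}$ (a $(Q_p,\1)$-weighted zero-sum sequence is a $(Q_p,B)$-weighted one, via constant weights $b_i=b$) together with Theorem \ref{e'q}. The paper states these implications without spelling out the constant-weight witness, so your write-up just makes that small step explicit.
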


\begin{proof}
Since every $(Q_p,\1)$-weighted zero-sum sequence is a $(Q_p,B)$-weighted zero-sum sequence, we see that $E_{Q_p,B}\leq E_{Q_p,\1}$.
Also, since every $(Q_p,B)$-weighted zero-sum sequence is a $Q_p$-weighted zero-sum sequence, we see that $E_{Q_p}\leq E_{Q_p,B}$. From Theorem 3 of \cite{AR} we see that $E_{Q_p}=p+2$. Now, by Theorem \ref{e'q}, we are done.
\end{proof}

The statement of Theorem \ref{eqp} does not hold when $p=5$ and $B=\1$, since from Corollary \ref{q5} we see that $E_{Q_5,\1}=9$. However, from the next result, we see that this statement holds when $B=Q_5$.

\begin{theorem}\label{eq5}
We have $E_{Q_5,Q_5}=7$.
\end{theorem}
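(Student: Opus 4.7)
The plan is to reduce $E_{Q_5,Q_5}$ to $E_{Q_5}$, which is already known to equal $7$ by Theorem 3 of \cite{AR}.

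For the lower bound, I would note that every $(Q_5,Q_5)$-weighted zero-sum subsequence is, after forgetting the second constraint, a $Q_5$-weighted zero-sum subsequence. So imposing the $B$-condition only makes the property more restrictive, giving $E_{Q_5,Q_5} \geq E_{Q_5} = 7$ immediately.

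For the matching upper bound, the key observation is that when $p=5$ the $B$-constraint is automatically satisfiable. Given any sequence $S$ in $\Z_5$ of length $7$, invoking $E_{Q_5}=7$ yields a $Q_5$-weighted zero-sum subsequence $T=(x_1,\ldots,x_5)$ of length $5$ with weights $a_1,\ldots,a_5 \in Q_5 = \{1,-1\}$ satisfying $a_1x_1+\cdots+a_5x_5=0$. Since each $a_i^2=1$, the diagonal choice $b_i=a_i \in Q_5$ gives
\[b_1a_1+\cdots+b_5a_5 \;=\; a_1^2+\cdots+a_5^2 \;=\; 5 \;\equiv\; 0 \pmod{5},\]
so $T$ is also a $(Q_5,Q_5)$-weighted zero-sum subsequence of length $5$. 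This yields $E_{Q_5,Q_5}\leq 7$, completing the proof.

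There is no substantive obstacle here; the entire argument rests on the numerical accident that $|Q_5|=2$ together with the required subsequence length being exactly $p=5$, which makes the second weight-sum condition trivially satisfiable by the diagonal choice $b_i=a_i$. In particular, unlike the case $B=\mathbf{1}$ treated in Corollary \ref{q5}, taking $B=Q_5$ does not collapse to a $(\mathbf{1},\mathbf{1})$-style problem, so one recovers the generic value $p+2$.
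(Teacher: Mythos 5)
Your proof is correct and follows essentially the same route as the paper: both directions reduce to $E_{Q_5}=7$ from Theorem 3 of \cite{AR}, and the upper bound uses exactly the paper's diagonal choice $b_i=a_i$, exploiting $a_i^2=1$ for $a_i\in Q_5=\{1,-1\}$ together with the subsequence length being $5\equiv 0\pmod 5$. No gaps; your write-up even makes the reason for $a_1^2+\cdots+a_5^2=0$ slightly more explicit than the paper does.
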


\begin{proof}
Let $S$ be a sequence in $\Z_5$ having length $E_{Q_5}$. Then $S$ has a $Q_5$-weighted zero-sum subsequence $T=(x_1,\ldots,x_5)$. So there exist $a_1,\ldots,a_5\in Q_5$ such that $a_1x_1+\cdots +a_5x_5=0$. Since $Q_5=\{-1,1\}$ we see that $a_1^2+\cdots +a_5^2=0$. Thus, we see that $T$ is a $(Q_5,Q_5)$-weighted zero-sum sequence. It follows that $E_{Q_5,Q_5}\leq E_{Q_5}$. Also, since every $(Q_5,Q_5)$-weighted zero-sum sequence is a $Q_5$-weighted zero-sum sequence, we see that $E_{Q_5}\leq E_{Q_5,Q_5}$. From Theorem 3 of \cite{AR} we see that $E_{Q_5}=7$. Hence, we are done.
\end{proof}

\begin{observation}\label{qp2}
If $0\notin B$, we see that every $(Q_p,B)$-weighted zero-sum sequence has length at least two. Now suppose $B\subseteq Q_p$. If $T=(x_1,x_2)$ is a $(Q_p,B)$-weighted zero-sum sequence, there exist $a_1,a_2\in Q_p$ and $b_1,b_2\in B$ such that $a_1x_1+a_2x_2=0$ and $b_1a_1+b_2a_2=0$. Since $B\subseteq Q_p$, it follows that $-1\in Q_p$, which in turn implies that $p\equiv 1~(\text{mod }4)$. Hence, if $p\equiv 3~(\text{mod }4)$, any $(Q_p,B)$-weighted zero-sum sequence has length at least three.
\end{observation}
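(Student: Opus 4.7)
The observation decomposes into two independent assertions: a length-at-least-two claim when $0 \notin B$, and a length-at-least-three claim when $B \subseteq Q_p$ and $p \equiv 3 \pmod{4}$. The plan is to handle each by ruling out the corresponding shorter lengths through a direct inspection of the defining equations.

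For the first claim, I would attempt to rule out a length-one $(Q_p,B)$-weighted zero-sum sequence $(x_1)$. Such a sequence would require $a_1 \in Q_p$ and $b_1 \in B$ with $a_1 x_1 = 0$ and $b_1 a_1 = 0$. Since $Q_p \subseteq U(p)$, the element $a_1$ is a unit of $\Z_p$, so $b_1 a_1 = 0$ forces $b_1 = 0$. But $0 \notin B$, a contradiction, so the length must be at least two.

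For the second claim, I would suppose $T = (x_1, x_2)$ is a $(Q_p, B)$-weighted zero-sum sequence with $B \subseteq Q_p$. From the defining relations, $a_1, a_2 \in Q_p$ and $b_1, b_2 \in B \subseteq Q_p$. Using that $Q_p$ is a subgroup of $U(p)$ (hence closed under multiplication and inversion), $b_1 a_1, b_2 a_2 \in Q_p$; and from $b_1 a_1 + b_2 a_2 = 0$ I obtain $-1 = (b_2 a_2)(b_1 a_1)^{-1} \in Q_p$. By the standard criterion that $-1$ is a square modulo $p$ exactly when $p \equiv 1 \pmod{4}$, this forces $p \equiv 1 \pmod{4}$. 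Contrapositively, when $p \equiv 3 \pmod{4}$ no length-two $(Q_p, B)$-weighted zero-sum sequence exists, so combined with the first part (which applies since $B \subseteq Q_p$ automatically implies $0 \notin B$) the length is at least three.

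There is no real obstacle here: both claims reduce to elementary unit-group reasoning together with the standard fact on when $-1$ is a quadratic residue. The only point worth being careful about is that the refinement needs non-existence for \emph{every} choice of weights, which is automatic from the contradictions above since the derivations use only that \emph{some} valid weights exist.
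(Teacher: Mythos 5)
Your argument is correct and is essentially the paper's own reasoning: the paper's observation asserts directly that $b_1a_1+b_2a_2=0$ with all factors in $Q_p$ forces $-1\in Q_p$, hence $p\equiv 1\ (\mathrm{mod}\ 4)$, and you merely fill in the routine details (units rule out length one when $0\notin B$, and closure of the subgroup $Q_p$ under products and inverses gives $-1\in Q_p$). No gaps; nothing further is needed.
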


\begin{theorem}
Let $B\subseteq \Z_p$. We have
\[C_{Q_p,B}=
\begin{cases}
6, & \text{if }p\equiv 1~(\emph{mod}~4)\text{ and }0\notin B; \\
9, & \text{if }p\equiv 3~(\emph{mod}~4)\text{ and }B\subseteq Q_p.
\end{cases}
\]
\end{theorem}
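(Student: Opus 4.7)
The plan is to derive both sides of the equality from results already in hand: the upper bound from the values of $C_{Q_p,\1}$ determined in the previous section, and the lower bound by re-running the constructions of Theorem~\ref{lb} inside the $(Q_p,B)$-setting.

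First I would establish the universal upper bound $C_{Q_p,B}\le C_{Q_p,\1}$ for any non-empty $B$. This is immediate: given a $(Q_p,\1)$-weighted zero-sum sequence with weights $a_1,\ldots,a_k\in Q_p$ satisfying $\sum a_i=0$, pick any $b\in B$ and set $b_i=b$ for every $i$; then $\sum b_ia_i=b\sum a_i=0$, so the sequence is a $(Q_p,B)$-weighted zero-sum sequence. Combined with Theorem~\ref{c1} (and the companion $p=5$ theorem) this gives $C_{Q_p,B}\le 6$ when $p\equiv 1\pmod 4$, and combined with Theorem~\ref{c7} (and the companion $p=7$ theorem) it gives $C_{Q_p,B}\le 9$ when $p\equiv 3\pmod 4$.

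Next I would attack the matching lower bounds by recycling the two sequences used in Theorem~\ref{lb}. Since $C_{Q_p}=3$, I fix a sequence $S'=(x,y)$ in $\Z_p$ with no $Q_p$-weighted zero-sum subsequence. In the case $p\equiv 1\pmod 4$ with $0\notin B$, Observation~\ref{qp2} tells me that every $(Q_p,B)$-weighted zero-sum sequence has length at least two, so I take $S=(0,x,0,y,0)$; any candidate $(Q_p,B)$-weighted zero-sum subsequence $T$ of consecutive terms has length $\ge 2$, and the first weighted-sum equation for $T$ reduces (after deleting the zero entries) to a $Q_p$-combination of a non-empty subset of $\{x,y\}$ equal to zero, contradicting the choice of $S'$. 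In the case $p\equiv 3\pmod 4$ with $B\subseteq Q_p$, Observation~\ref{qp2} raises the length floor to three and I run the same argument on $S=(0,0,x,0,0,y,0,0)$.

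The only place the argument could really slip is when the candidate subsequence $T$ happens to consist entirely of zeros, because $(Q_p,B)$-weighted zero-sum sequences all of whose terms vanish can exist (the first equation is automatic). I would handle this by the spacing of the zeros in $S$: the first construction has no two consecutive zeros, so the length-$\ge 2$ floor rules out such $T$; the second construction has no three consecutive zeros, so the length-$\ge 3$ floor does the same. This is the only subtlety I expect, and once it is checked the two bounds match and the theorem follows.
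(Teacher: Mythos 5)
Your proposal is correct and follows essentially the same route as the paper: the upper bound via $C_{Q_p,B}\le C_{Q_p,\1}$ (choosing a constant weight $b\in B$) together with the values of $C_{Q_p,\1}$, and the lower bound by rerunning the two constructions of Theorem \ref{lb} with Observation \ref{qp2} in place of Observation \ref{qp1}. Your explicit check that the spacing of zeroes rules out an all-zero candidate subsequence is a point the paper leaves implicit, and it is handled correctly.
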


\begin{proof}
Since every $(Q_p,\1)$-weighted zero-sum sequence is a $(Q_p,B)$-weighted zero-sum sequence, we see that $C_{Q_p,B}\leq C_{Q_p,\1}$. Now, by Remark \ref{cqp} and by replacing Observation \ref{qp1} and $(Q_p,\1)$ in the proof of Theorem \ref{lb} by Observation \ref{qp2} and $(Q_p,B)$ respectively, we are done.
\end{proof}

\begin{theorem}
Let $B\subseteq \Z_p$. We have
\[D_{Q_p,B}=
\begin{cases}
4\text{ or }5, & \text{if }p\equiv 1~(\emph{mod}~4)\text{ and }0\notin B; \\
5, & \text{if }p\equiv 3~(\emph{mod}~4)\text{ and }B\subseteq Q_p.
\end{cases}
\]
\end{theorem}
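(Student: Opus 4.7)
The plan is to mirror the arguments used for $C_{Q_p,B}$ just above, importing the relevant results on $D_{Q_p,\mathbf 1}$ (Theorems \ref{dqp2} and \ref{dqp3}) and replacing Observation \ref{qp1} by Observation \ref{qp2}.

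For the upper bound in both cases, I would first note that for any non-empty $B\subseteq\Z_p$ every $(Q_p,\mathbf 1)$-weighted zero-sum sequence is also a $(Q_p,B)$-weighted zero-sum sequence: given coefficients $a_i\in Q_p$ with $\sum a_ix_i=0$ and $\sum a_i=0$, pick any $b\in B$ and set $b_i=b$ for all $i$, so that $\sum b_ia_i=b\sum a_i=0$. This yields $D_{Q_p,B}\leq D_{Q_p,\mathbf 1}\leq 5$ by Theorem \ref{dqp2}, which settles the upper bound $5$ in both rows of the table.

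For the lower bound when $p\equiv 1\pmod 4$ and $0\notin B$, I would copy the construction of Theorem \ref{dqp2}. Using $D_{Q_p}=3$ from \cite{AMP}, fix a sequence $S'=(x,y)$ in $\Z_p$ with no $Q_p$-weighted zero-sum subsequence, and set $S=S'+(0)$, of length $3$. By the first half of Observation \ref{qp2}, every $(Q_p,B)$-weighted zero-sum subsequence $T$ of $S$ has length at least two; any such $T$ is in particular a $Q_p$-weighted zero-sum sequence, and deleting its (at most one) zero term leaves a nonempty $Q_p$-weighted zero-sum subsequence of $S'$, contradicting the choice of $S'$. Hence $D_{Q_p,B}\geq 4$.

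For the lower bound when $p\equiv 3\pmod 4$ and $B\subseteq Q_p$, I would instead form $S=S'+(0,0)$, now of length $4$. The second half of Observation \ref{qp2} forces any $(Q_p,B)$-weighted zero-sum subsequence of $S$ to have length at least three; after deleting the (at most two) zero terms one again recovers a nonempty $Q_p$-weighted zero-sum subsequence of $S'$, a contradiction. Hence $D_{Q_p,B}\geq 5$, which combined with the upper bound gives equality. The only real thing to verify carefully is that the hypotheses $0\notin B$ and $B\subseteq Q_p$ are used at precisely the places where Observation \ref{qp2} is invoked, so that trivial weighted zero-sums on short subsequences are genuinely excluded; no new idea beyond this bookkeeping is required.
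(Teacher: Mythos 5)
Your proposal is correct and follows essentially the same route as the paper: the upper bound via $D_{Q_p,B}\leq D_{Q_p,\1}\leq 5$ (Theorem \ref{dqp2}), and the lower bounds by repeating the extremal constructions $S'+(0)$ and $S'+(0,0)$ from Theorems \ref{dqp2} and \ref{dqp3} with Observation \ref{qp1} replaced by Observation \ref{qp2}. The only difference is that you spell out the deletion-of-zero-terms step that the paper leaves implicit, which is a harmless (and welcome) elaboration.
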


\begin{proof}
Since every $(Q_p,\1)$-weighted zero-sum sequence is a $(Q_p,B)$-weighted zero-sum sequence, we see that $D_{Q_p,B}\leq D_{Q_p,\1}$. Now, by Remark \ref{dqp} and by replacing Observation \ref{qp1} and $(Q_p,\1)$ in the second paragraph of the proof of Theorem \ref{dqp2} and in the proof of Theorem \ref{dqp3} by Observation \ref{qp2} and $(Q_p,\1)$ by $(Q_p,B)$, we are done.
\end{proof}

\section{Concluding remarks}

When $p\equiv 1~(\text{mod}~4)$, we believe that $D_{Q_p,\1}=4$. If this is true, it will follow that $D_{Q_p,B}=4$ when $p\equiv 1~(\text{mod}~4)$ and $0\notin B$. However, despite our best efforts, we have been unable to show this.

\bigskip


\begin{thebibliography}{12}
\bibitem{AMP}
S. D. Adhikari, I. Molla, and S. Paul, Extremal sequences for some weighted zero-sum constants for cyclic groups,  {\em CANT IV, Springer Proc. Math. Stat.} {\bf 347} (2021), 1-10.

\bibitem{AR}
S. D. Adhikari and P. Rath, Davenport constant with weights and some related questions, {\em Integers} {\bf 6} (2006), \#A30. 

\bibitem{CM}
M. N. Chintamani and B. K. Moriya, Generalizations of some zero sum theorems, {\em Proc. Indian Acad. Sci.
(Math. Sci.)} {\bf 122} (2012), 15-21. 

\bibitem{Nat}
M. B. Nathanson, Additive Number Theory: Inverse Problems and the Geometry of Sumsets, {\em Springer}, 1996.

\bibitem{S}
J. -P. Serre, A course in Arithmetic, {\em Springer}, 1973.

\bibitem{SKS1} 
S. Mondal, K. Paul, and S. Paul, On a different weighted zero-sum constant, {\em Discrete Math.} {\bf 346} (2023), 113350. 

\bibitem{KS}
K. Paul and S. Paul, Doubly-weighted zero-sum constants, \\
{arXiv:2311.00090v3 [math.NT] }
\end{thebibliography}
\end{document}